\theoremstyle{plain}
\newtheorem{theorem}{Theorem}[section]
\newtheorem{corollary}[theorem]{Corollary}
\newtheorem{lemma}[theorem]{Lemma}
\newtheorem{proposition}[theorem]{Proposition}
\newtheorem{remark}[theorem]{Remark}
\numberwithin{equation}{section}
\def\boxit#1{\vbox{\hrule\hbox{\vrule\kern3pt
     \vbox{\kern3pt#1\kern3pt}\kern3pt\vrule}\hrule}}
\newfont{\msam}{msam10}            
\newfont{\msym}{msbm10 scaled\magstep1}            
\newfont{\gotic}{eufm10 scaled\magstep1}
\newcommand{\ra}{\rightarrow}
\newcommand{\lra}{\mbox{\Huge $\longrightarrow$}}
\newcommand{\kra}{\kern-7pt\rightarrow\kern-7pt}
\def\bbh{{\mathbb H}}
\def\bbr{{\mathbb R}}
\def\bbn{{\mathbb N}}
\def\bbc{{\mathbb C}}
\def\ra{\rightarrow}
\def\lra{\longrightarrow}
\numberwithin{equation}{section}
\def\begtab{\begin{tabbing} WW\=23/02: \= point 1\kill}
\def\NIL1{{\mathcal H^{3}}}
\def\wt{\widetilde}
\def\n2{\mathfrak{N}_2}
\def\frakm{{\mathfrak m}}
\begin{document}

\title[ Area and holonomy  on   the principal $U(n)$ bundles ]{  Area and holonomy   on   the principal $U(n)$ bundles over the dual of Grassmannian manifolds}

\author{Taechang Byun}
\address{Department of Mathematics and Statistics, Sejong University,
Seoul 143-747, Korea}
\email{tcbyun@gmail.com}



\keywords{Holonomy displacement, complete totally geodesic  submanifold, Hermition form, Grassmannian manifold}
\maketitle

\begin{abstract}
Consider the principal $U(n)$ bundles over the dual of Grassmann manifolds
$U(n)\ra U(n,m)/U(m) \stackrel{\pi}\ra D_{n,m}$.
Given a 2-dimensional subspace 
$\frakm' \subset \frakm $ $ \subset \mathfrak{u}(n,m), $ 
assume 
either 
$\frakm'$ is induced by $X,Y \in U_{m,n}(\bbc)$
with $X^{*}Y = \mu I_n$ for some $\mu \in \bbr$
or
by $X,iX \in U_{m,n}(\bbc)$.  
Then $\frakm'$  gives rise to a complete totally geodesic surface $S$ in the base
space. Furthermore,
let $\gamma$
be a piecewise smooth, simple closed curve on $S$
parametrized by $0\leq t\leq 1$, and $\wt\gamma$ its horizontal lift on
the bundle
$U(n) \ra \pi^{-1}(S) \stackrel{\pi}{\rightarrow} S,$
which is immersed in
$U(n) \ra U(n,m)/U(m) \stackrel{\pi}\ra D_{n,m} $.
Then
$$
\wt\gamma(1)= \wt\gamma(0) \cdot ( e^{i \theta} I_n)
\text{\hskip24pt or\hskip12pt } \wt\gamma(1)= \wt\gamma(0),
$$
depending on whether $S$ is a complex submanifold or not,
where $A(\gamma)$ is the area of the region on the surface $S$
surrounded by $\gamma$ and 
$\theta= 2  \cdot \tfrac{1}{n} A(\gamma).$
\end{abstract}

\section{Introduction}

For two natural numbers $n,m \in \bbn,$ 
define an Hermitian form $F : \bbc ^{n+m} \rightarrow \bbc$  by
\begin{align*}
  F(v,w) & = v^* \, \Lambda^n _{m} \, w \\
         & = -\sum_{k=1}^{n} \bar{v}_k w_k + \sum_{s=n+1}^{n+m}\bar{v}_s w_s,
\end{align*}         
where $v$ and  $w$ are regarded as column vectors, and
$$
  \Lambda^{n} _{m} =
  \left(
        \begin{array}{cccc}
          -I_n & O_{n \times m} \\
          O_{m \times n} & I_m \\
        \end{array}
  \right).
$$
Consider the Lie group
$$
  U(n,m) 
  = 
  \{ \Phi \in GL_{n+m}(\bbc) | 
         \Phi^* \: \Lambda^{n} _{m} \: \Phi \: =  \: \Lambda^{n} _{m}
  \}.
$$
Then, 
$$
  U(n,m) 
  = 
  \{ \Phi \in GL_{n+m}(\bbc) \, \big| \, 
         F(\Phi v, \Phi w ) = F(v,w), \, v,w \in \bbc ^{n+m}
  \},
$$
and
$D_{n,m} := U(n,m)/\left(U(n) \times U(m)\right)$ can be identified 
with the set of all $n$-dimensional subspaces $V$ of $\bbc ^{n+m}$ such that 
$F(v,v) \le 0$ for every $v \in V$
by considering the first $n$ columns of an element in $U(n,m).$

Let
$$
  U_{m,n}(\bbc):= 
  \{ 
     X \in M_{m,n}(\bbc) 
     \,\, | \,\, 
     X^{*} X = \lambda I_n \text{ for some } \lambda \in \bbc - \{0\}
  \},
$$
which may be regarded as generalizations of a unitary group.
As it did for  
the princiapl $U(n)$ bundles 
$$U(n) \ra U(n+m)/U(m) \ra G_{n,m}$$ over 
a Grassmannain manifold $G_{n,m} $ in \cite{BC},
it will also play an important role in studying 
the princiapl $U(n)$ bundles 
$$U(n) \ra U(n,m)/U(m) \ra D_{n,m}$$ 
over $D_{n,m}$
such that 
$U(n,m)$ has a left invariant metric, related to the Killing-Cartan form, given by
$$
  \langle A,B \rangle 
   = \tfrac{1}{2} \text{Re}\big(\text{Tr}(A^{*}B)\big), 
      \qquad A,B \in \mathfrak{u}(n,m),
$$
and that 
$D_{n,m} = U(n,m)/\left(U(n) \times U(m)\right)$
has the induced metric which makes the projection a Riemannian submersion.

\bigskip
Consider the Hopf fibration $S^1\ra S^3\ra S^2$. Let $\gamma$ be a
simple closed curve on $S^2$. Pick a point in $S^3$ over
$\gamma(0)$, and take the unique horizontal lift $\wt\gamma$ of
$\gamma$. Since $\gamma(1)=\gamma(0)$, $\wt\gamma(1)$ lies in the
same fiber as $\wt\gamma(0)$ does. We are interested in
understanding the difference between $\wt\gamma(0)$ and
$\wt\gamma(1)$. The following equality was already known
\cite{Pin}:
 $$
V(\gamma)=e^{\frac{1}{2}  A(\gamma) i},
$$
where $V(\gamma)$ is  the holonomy
displacement along $\gamma$, and
$A(\gamma)$ is the area of the region surrounded by $\gamma$.

\medskip
In this paper, we  generalize this fact to the $U(n)$ bundle over 
$D_{n,m}$ through $U_{m,n}(\bbc)$
$$
   U(n) \ra U(n,m)/U(m) \stackrel{\pi}\ra D_{n,m}.
$$
\noindent
The main results are stated as follows:
For $\hat{X} \in {\mathfrak u}(n,m)$ which is induced by
$X \in U_{m,n}(\bbc)$, 
consider a 2-dimensional subspace 
$\frakm' \subset \frakm  $ $ \subset \mathfrak{u}(n,m) $ 
with $\hat{X} \in \frakm'.$
Assume either 
$$\frakm' = \text{Span}_{\bbr}\{\hat{X},\hat{Y}\}$$
for some $Y \in U_{m,n}(\bbc)$ with $X^{*}Y = \mu I_n$ for some $\mu \in \bbr$
or
$$\frakm' = \text{Span}_{\bbr} \{\hat{X},\widehat{iX}\}.$$
Then $\frakm'$  gives rise to a complete totally geodesic surface $S$ in the base
space. Furthermore,
let $\gamma$
be a piecewise smooth, simple closed curve on $S$
parametrized by $0\leq t\leq 1$, and $\wt\gamma$ its horizontal lift on
the bundle
$U(n) \ra \pi^{-1}(S) \stackrel{\pi}{\rightarrow} S,$
which is immersed in
$U(n) \ra U(n,m)/U(m) \stackrel{\pi}\ra D_{n,m} $.
Then
$$
\wt\gamma(1)= \wt\gamma(0) \cdot ( e^{i \theta} I_n)
\text{\hskip24pt or\hskip12pt } \wt\gamma(1)= \wt\gamma(0),
$$
depending on whether 
$S$ is a complex submanifold or not,
where $A(\gamma)$ is the area of the region on the surface $S$
surrounded by $\gamma$ and 
$\theta= 2 \cdot \tfrac{1}{n} A(\gamma).$
See {\rm Theorem  \ref{thm-sphere}}
\bigskip

\section{ Preliminaries }
\label{pre}

\bigskip
To begin with, we introduce the results of \cite{CL}:
for the circle group
$$
  S^1 \cong S(U(1) \times U(1))=
  \left\{
        \left(
              \begin{array}{cccc}
                e^{-iz} & 0 \\
                O      & e^{iz} \\
              \end{array}
        \right)
        \:  : \:
        0 \le z \le 2 \pi
  \right\},
$$
consider the principal bundle
$$S^1 \lra SU(1,1) \stackrel{p}\lra \bbc H^1$$
under the representation of $SU(1,1)$ into $\text{GL}(4,\bbr)$ such that
$$
  w =   \left(
              \begin{array}{rrrr}
                 w_1 &  w_2 &  w_3 &  w_4 \\
                -w_2 &  w_1 & -w_4 &  w_3 \\
                 w_3 & -w_4 &  w_1 & -w_2 \\
                 w_4 &  w_3 &  w_2 &  w_1 \\
              \end{array}
        \right)
$$ 
with the condition $- w_1 ^2 - w_2 ^2 + w_3 ^2 + w_4 ^2 = -1,$
which induces the following identifications
\begin{align*}
  \bbc H^1 
  &=
  \left\{
        \left(
              \begin{array}{rrrr}
                 x &  0 &  y &  z \\
                 0 &  x & -z &  y \\
                 y & -z &  x &  0 \\
                 z &  y &  0 &  x \\
              \end{array}
        \right)
        \:  : \:
        -x^2 + y^2 + z^2 = -1, \, x>0
  \right\}  \\
  &=
  \left\{ (x,y,z) \in \bbr^3 \; : \;  -x^2 + y^2 + z^2 = -1, \, x>0  \right\} \\
  &=: \bbh^2,  
\end{align*}
where 
$$p: SU(1,1) \lra \bbc H^1$$is defined by $p(w) = w \tilde{w}$
with
$$
  \tilde{w} 
  =   \left(
              \begin{array}{rrrr}
                 w_1 & -w_2 &  w_3 &  w_4 \\
                 w_2 &  w_1 & -w_4 &  w_3 \\
                 w_3 & -w_4 &  w_1 &  w_2 \\
                 w_4 &  w_3 & -w_2 &  w_1 \\
              \end{array}
        \right).
$$ 
Note that $p$ has the following properties:
\begin{align*}
p(wv)&=w p(v) \wt w\quad\text{for all } w,v\in SU(2)\\
p(wv)&=p(w) \quad\text{if and only if}\quad 
              v\in S\big(U(1) \times U(1)) \cong S^1,
\end{align*}
which shows that it is the orbit map of the principal bundle. 
But we have to be careful that the inclusion map 
$\bbc H^1 \hookrightarrow SU(1,1)$ 
is not a cross-section in this bundle. In fact,
$p(v) = v^2 \in \bbc H^1$ for any $ v \in \bbc H^1.$

\begin{theorem}[\cite{CL}]
\label{area-u2}
Let 
$ 
  S^1\lra SU(1,1) \stackrel{p}\lra 
  \big(\bbc H^1, \langle \cdot, \cdot \rangle _{\bbh^2} \big)
$ 
be the natural fibration.
Let $\gamma$ be a piecewise smooth, simple closed curve on $\bbc H^1$.
Then the holonomy displacement along $\gamma$ is given by
$$
  V(\gamma)= e^{\frac{1}{2} A(\gamma) i}  
$$
where $A(\gamma)$ is the area of the region on $\bbc H^1$
enclosed by $\gamma.$ 
\end{theorem}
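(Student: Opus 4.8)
The plan is to reduce the holonomy computation over $S$ to the rank-one model of {\rm Theorem \ref{area-u2}} by realizing the restricted bundle as a pullback of $S^1 \ra SU(1,1) \stackrel{p}\ra \bbc H^1$ under a diagonal embedding, and then to account for the two normalization constants that turn the $\tfrac12$ of {\rm Theorem \ref{area-u2}} into $2\cdot\tfrac1n$. First I would fix the matrix model: writing elements of $\frakm$ as $\left(\begin{smallmatrix}0 & Q\\ Q^* & 0\end{smallmatrix}\right)$ and normalizing $X\in U_{m,n}(\bbc)$ so that $X^*X = I_n$, I set $\hat X = \left(\begin{smallmatrix}0 & X^*\\ X & 0\end{smallmatrix}\right)$. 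A direct bracket computation then shows that in both cases $\frakm'$ is a Lie triple system, hence $S=\exp(\frakm')\cdot o$ is a complete totally geodesic surface (completeness because $D_{n,m}$ is a complete symmetric space): in the complex case $[\hat X,\widehat{iX}] = \diag(2iI_n,\,-2iXX^*)$ and $[[\hat X,\widehat{iX}],\hat X] = -4\widehat{iX}\in\frakm'$, while in the real case $X^*Y = Y^*X = \mu I_n$ forces the $\mathfrak{u}(n)$-block of $[\hat X,\hat Y]$ to vanish and $[[\hat X,\hat Y],\hat X] = \mu\hat X - \hat Y\in\frakm'$.

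The same computation identifies the type of $S$. Since $\widehat{iX} = \pm J\hat X$ for the invariant complex structure $J$, the span $\{\hat X,\widehat{iX}\}$ is $J$-invariant, so that $S$ is a complex submanifold; for $\mu\in\bbr$ one checks that $i\lambda = a\lambda+b\mu$ has no real solution, so $\widehat{iX}\notin\frakm'$ and $\{\hat X,\hat Y\}$ is totally real. Next I would analyze the bundle. The fiber $U(n)$ of $U(n)\ra U(n,m)/U(m)\ra D_{n,m}$ corresponds to the $\mathfrak{u}(n)$-summand of $\mathfrak k = \mathfrak{u}(n)\oplus\mathfrak{u}(m)$, and the curvature of the canonical connection along $TS$ is the $\mathfrak{u}(n)$-projection of $[\frakm',\frakm']$. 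In the real case this projection is $0$ by the bracket above; as $\gamma$ bounds a disk in the simply connected $S$ on which the curvature vanishes, the holonomy is trivial and $\wt\gamma(1)=\wt\gamma(0)$.

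In the complex case the columns $Xf_j$ are orthonormal, so $\hat X$ and $\widehat{iX}$ preserve each of the $n$ mutually $F$-orthogonal signature-$(1,1)$ planes $W_j = \mathrm{Span}_\bbc\{f_j,\,Xf_j\}$ and act on each as the standard generators of $\mathfrak{su}(1,1)$. Hence the subgroup they generate is a copy of $SU(1,1)$ embedded diagonally as $g\mapsto\bigoplus_j g$, and $\pi^{-1}(S)\ra S$ is the pullback of $S^1\ra SU(1,1)\ra\bbc H^1$, with the fiber circle sent into the center of $U(n)$ via $e^{iz}\mapsto e^{iz}I_n$. {\rm Theorem \ref{area-u2}} then yields $\wt\gamma(1)=\wt\gamma(0)\cdot(e^{i\theta}I_n)$ with $\theta = \tfrac12 A_{\bbc H^1}(\gamma)$, where $A_{\bbc H^1}$ is the area in the model metric of \cite{CL}.

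The remaining and most delicate step is to convert $A_{\bbc H^1}(\gamma)$ into the area $A(\gamma)$ measured in the metric $D_{n,m}$ induces on $S$. Using $\langle A,B\rangle = \tfrac12\mathrm{Re}\,\mathrm{Tr}(A^*B)$ I compute $\|\hat X\|^2 = \|\widehat{iX}\|^2 = n$ and $\langle\hat X,\widehat{iX}\rangle = 0$, so the induced metric on $S$ is $n$ times the metric carried by the generators on a single plane $W_j$; the diagonal embedding therefore scales areas by $1/n$. I expect the main obstacle to be pinning down the second factor rigorously, namely comparing the single-plane trace metric with the $\bbh^2$ metric of \cite{CL}, which supplies a fixed factor of $4$. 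Granting this, $A_{\bbc H^1}(\gamma) = \tfrac{4}{n}A(\gamma)$ and hence $\theta = \tfrac12\cdot\tfrac{4}{n}A(\gamma) = 2\cdot\tfrac1n A(\gamma)$, as claimed. The cleanest check of the factor $4$ is the case $n=1$, where $S$ is literally a $\bbc H^1$ and the required identity $2A(\gamma) = \tfrac12 A_{\bbc H^1}(\gamma)$ fixes the normalization once and for all.
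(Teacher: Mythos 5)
Your proposal does not prove the statement in question. The statement is Theorem \ref{area-u2} itself: the rank-one holonomy formula $V(\gamma)=e^{\frac{1}{2}A(\gamma)i}$ for the fibration $S^1\ra SU(1,1)\stackrel{p}\ra \big(\bbc H^1,\langle\cdot,\cdot\rangle_{\bbh^2}\big)$, which the paper imports from \cite{CL} and never reproves. What you have written is instead a sketch of the paper's main result, Theorem \ref{thm-sphere}, for the bundle $U(n)\ra U(n,m)/U(m)\ra D_{n,m}$ --- the Lie-triple-system verification, the diagonally embedded $SU(1,1)$, the flatness argument when $\mu\in\bbr$, and the normalization bookkeeping turning $\tfrac12$ into $2\cdot\tfrac1n$. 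That material roughly parallels the paper's Theorems \ref{easy}, \ref{geod-cond-sphere1}, \ref{easy_generalizion} and \ref{thm-sphere}, but it is beside the point here, and, decisively, at the one place where your argument meets the assigned statement you invoke it as a black box (``Theorem \ref{area-u2} then yields $\theta=\tfrac12 A_{\bbc H^1}(\gamma)$''). As a proof of Theorem \ref{area-u2} this is circular: nothing in your text computes any holonomy in $SU(1,1)$, and your closing calibration of the factor $4$ ``by the case $n=1$'' presupposes exactly the constant $\tfrac12$ that the theorem asserts.

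A genuine proof has to work inside the rank-one model. Equip $SU(1,1)$ with the left-invariant metric making $E_1,E_2,E_3$ orthonormal; the connection of the fibration is the horizontal distribution spanned by $E_1,E_2$, and its connection form $\omega$ is the $E_3$-component of the Maurer--Cartan form. Because the structure group $S^1$ is abelian, the displacement along a simple closed $\gamma$ bounding a region $D$ is obtained from Stokes' theorem, $V(\gamma)=\exp\big(\int_{D}\,d\omega\big)$ up to sign conventions, and the relation $[e_1,e_2]=-2e_3$ pins down $d\omega$ on horizontal pairs. One must then convert the resulting integral into the $\bbh^2$-area on $\bbc H^1$; this is exactly where the geometry of the projection $p(w)=w\tilde w$, its restriction to $T=\bbc H^1$ as the squaring map $p(w)=w^2$, and the conformal comparison carried out in Section \ref{pre} (where $|D_1 h(wH)|=2$ and $|D_2 h(wH)|=|\sinh 2x|$, so areas scale by $4$ between the quotient metric and the $\bbh^2$ metric) enter and produce the coefficient $\tfrac12$ rather than $2$. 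None of these steps --- horizontal lift, curvature/Stokes computation, or metric comparison --- appears in your proposal, so the assigned statement remains unproved.
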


\bigskip

To apply the result of Theorem \ref{area-u2},
we will study the isomorphic equivalence of the principal bundle
$$S^1 \lra SU(1,1) \stackrel{p}\lra \bbc H^1$$
to the one
$$
  S\big( U(1) \times U(1) \big)
  \ra
  SU(1,1)
  \ra
  SU(1,1) / S\big(U(1) \times U(1)),
$$ 
but not the isometric equivalence. 
In fact, a conformal map $h: SU(1,1) / S\big(U(1) \times U(1)) \ra \bbc H^1$ will be constructed such that the identity map on $SU(1,1)$ is the bundle map covering  it.

\bigskip
Define a map $h: SU(1,1)/S\big(U(1) \times U(1)\big) \ra \bbc H^1$ by
$$h(v H) = v^2 = p(v) \qquad v \in \bbc H^1,$$
where $H = S\big(U(1) \times U(1)\big).$
Then, the identity map of $SU(1,1)$ is a trivially isomorphic bundle map which covers the map $h.$

\bigskip
The Lie group $SU(1,1)$ will have a left-invariant Riemannian metric
given by the
following orthonormal basis on the Lie algebra $\mathfrak{su}(1,1) $
$$
  E_1 =
  \left[
          \begin{array}{cc}
            0 & 1 \\ 1 & 0
          \end{array}
  \right]\ ,
  \quad
  E_2 =
  \left[
          \begin{array}{cc}
            0 & -i \\ i & 0
          \end{array}
  \right]\ ,
  \quad
  E_3 =
  \left[
          \begin{array}{cc}
            -i & 0 \\ 0 & i
          \end{array}
  \right]\ ,
$$
which correspond to
\[
e_{1}= \left( \begin{array}{rrrr}
0 & 0 & 1& 0  \\
0 & 0 & 0  & 1\\
1 & 0 & 0 &0\\
0 & 1 & 0 &0
\end{array}\right),\;\;
e_{2}= \left( \begin{array}{rrrr}
0 & 0 & 0& 1  \\
0 & 0 & -1  & 0\\
0 & -1 & 0 &0\\
1 & 0 & 0 &0
\end{array}\right) ,\;\;
e_{3}=
\left( \begin{array}{rrrr}
0 & 1 & 0& 0  \\
-1 & 0 & 0  & 0\\
0 & 0 & 0 &-1\\
0 & 0 & 1 &0\\
\end{array}\right) 
\]
in $\frak{gl}(2k,\bbr),$ respectively.
Notice that 
$[e_1,e_2]= -2 e_3$.

\bigskip
In order to understand 
the map $h$ between base spaces and the projection map $p$ better, 
refer to \cite{CL} and consider the subset of $SU(1,1)$:
\begin{align*}
T
&=
\left\{
\left[\begin{array}{ll}
\cosh x & (\sinh x)e^{-i y}\\
(\sinh x)e^{i y} &\cosh x\\
\end{array}\right]\ :\  x\ge 0,\ 0\leq y\leq 2\pi
\right\}\\
&=
\left\{
\left[\begin{array}{rrrrrrrr}
\cosh x &0          &(\sinh x)(\cos y) &(\sinh x)(\sin y)\\
0          &\cosh x & -(\sinh x)(\sin y) &(\sinh x)(\cos y)\\
 (\sinh x)(\cos y) &-(\sinh x)(\sin y) &\cosh x &0         \\
 (\sinh x)(\sin y) & (\sinh x)(\cos y) &0          &\cosh x\\
\end{array}\right]
\right\} \\
\end{align*}
which is the exponential image of
$$
\frakm=
\left\{\left[\begin{array}{cc}
 0 &  \bar{\xi}^{t} \\ \xi &0
\end{array} \right]\ :\ \xi  \in  {\bbc} \right\}.
$$
Furthermore, it is exactly same as $\bbc H^1,$ so the map $p$ restricted to $T$ is just the squaring map; that is,
$$
p(w)=w^2,\quad w\in T.
$$

\medskip
To check $h$ is a conformal map:
given 
$$
  w =\big(\cosh{x}, (\sinh{x})(\cos{y}), (\sinh{x})(\sin{y}) \big) \in T 
    = \bbc H^1,
$$
\begin{align*}
  \big| D_1(wH) \big| 
  &= \big|(D_1 w)^{\text{h}} \big| \\
  &= \big|
           \big(
                 (\cos{y}) {L_w}_{*} e_1 + (\sin{y}) {L_w}_{*} e_2
           \big)^{\text{h}} 
     \big| \\
  &= 1
\end{align*}
and
\begin{align*}
  &\big| D_2(wH) \big| = \big|(D_2 w)^{\text{h}} \big| \\
  &= \big| 
           \big( -\tfrac{1}{2} (\sinh{2x})(\sin{y}) {L_w}_{*} e_1 
                   + \tfrac{1}{2} (\sinh{2x})(\cos{y}) {L_w}_{*} e_2
                   + (\sinh^2 {x}) {L_w}_{*} e_3
           \big)^{\text{h}} 
     \big| \\
  &= \tfrac{1}{2} \big| \sinh{2x} \big|,
\end{align*}
while
\begin{align*}
  \big| D_1 \, h(wH) \big| 
  &= \big| D_1 \, w^2 \big| \\
  &= \big|
          ( 2\sinh{2x}, \, 2 (\cosh{2x}) (\cos{y}), \, 2 (\cosh{2x}) (\sin{y}) )
     \big| \\
  &= 2
\end{align*}
and
\begin{align*}
  \big| D_2 \, h(wH) \big| 
  &= \big| D_2 \, w^2 \big| \\
  &= \big|
             (    0, \, - (\sinh{2x}) (\sin{y}), \,  (\sinh{2x}) (\cos{y})  )
     \big| \\
  &= \big| \sinh{2x} \big|.
\end{align*}
Thus $h$ is a conformal map.

\bigskip
From Theorem \ref{area-u2}, we get the following result:

\begin{theorem} \label{simple}
Consider 
$$S(U(1) \times U(1)) \ra SU(1,1)\ra SU(1,1)/S(U(1) \times U(1)) .$$ 
Let $\gamma$ be a piecewise smooth, simple closed curve on 
$SU(1,1)/S(U(1) \times U(1)) $.
Then the holonomy displacement along $\gamma$ is given by
$$
  V(\gamma)= e^{2 A(\gamma)\Phi} = e^{\frac{1}{2} A(h \circ \gamma) i}  
  \in S(U(1) \times U(1))  \cong S^1
$$
where $A(\gamma)$ is the area of the region on $SU(1,1)/S(U(1) \times U(1))$
enclosed by $\gamma$ and 
$$
  \Phi = 
  \left[
          \begin{array}{cc}
            i & 0 \\ 0 & -i
          \end{array}
  \right] .
$$
\end{theorem}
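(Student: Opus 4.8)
The plan is to transport Theorem~\ref{area-u2} across the diffeomorphism $h$, using the fact that both bundles have the same total space $SU(1,1)$, the same structure group $S(U(1)\times U(1))$, and the same horizontal distribution, so that their holonomy displacements agree as group elements; the only discrepancy will be the conformal distortion of area under $h$.

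First I would note that the orbit map $p$ factors as $p=h\circ\pi$, where $\pi\colon SU(1,1)\ra SU(1,1)/S(U(1)\times U(1))$ is the quotient projection. Since $p(wv)=p(w)$ precisely when $v\in S(U(1)\times U(1))$, the fibers of $p$ are exactly the right cosets $w\,S(U(1)\times U(1))$, which are also the fibers of $\pi$. Thus at every $w\in SU(1,1)$ the two bundles have the same vertical subspace, and since the horizontal subspace is its orthogonal complement for the fixed left-invariant metric on $SU(1,1)$, the two horizontal distributions coincide.

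Next, because $p=h\circ\pi$ and the horizontal distributions agree, any horizontal lift $\wt\gamma$ of $\gamma$ in the bundle over $SU(1,1)/S(U(1)\times U(1))$ is simultaneously the horizontal lift of $h\circ\gamma$ in the bundle over $\bbc H^1$, with the same initial point. Writing $\wt\gamma(1)=\wt\gamma(0)\cdot V(\gamma)$ with $V(\gamma)\in S(U(1)\times U(1))$, the same group element serves both bundles, so Theorem~\ref{area-u2} yields $V(\gamma)=e^{\frac12 A(h\circ\gamma)\,i}$, where $A(h\circ\gamma)$ is the area measured in $\langle\cdot,\cdot\rangle_{\bbh^2}$. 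I would then invoke the conformality computation already carried out above: since $h$ multiplies lengths by the constant factor $2$, it multiplies areas by $4$, so $A(h\circ\gamma)=4A(\gamma)$ with $A(\gamma)$ the area in the base $SU(1,1)/S(U(1)\times U(1))$. Substituting gives $V(\gamma)=e^{2A(\gamma)\,i}$, and under the identification $S^1\cong S(U(1)\times U(1))$ that sends $e^{i\psi}$ to $\diag(e^{i\psi},e^{-i\psi})=e^{\psi\Phi}$ this equals $e^{2A(\gamma)\Phi}$, as claimed.

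The main obstacle is less computational than bookkeeping: one must be sure that the two holonomy displacements are literally the \emph{same} element of $S(U(1)\times U(1))$---not merely related through $h$---which is exactly what the coincidence of fibers and horizontal distributions guarantees, since both bundles carry the identical right $S(U(1)\times U(1))$-action. The remaining care is to fix the orientation and sign conventions consistently so that the scalar exponential $e^{\frac12 A\,i}$ of Theorem~\ref{area-u2} and the matrix exponential in $\Phi$ correspond under the chosen isomorphism $S^1\cong S(U(1)\times U(1))$, and to confirm that the conformal factor is the same in both tangent directions, which the displayed values $|D_1|,\,|D_2|$ and $|D_1\,h|,\,|D_2\,h|$ confirm.
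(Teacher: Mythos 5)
Your proposal is correct and follows essentially the same route as the paper: the paper deduces Theorem~\ref{simple} from Theorem~\ref{area-u2} precisely by observing that the identity on $SU(1,1)$ is a bundle isomorphism covering the conformal map $h$, whose constant length-scaling factor $2$ gives $A(h\circ\gamma)=4A(\gamma)$. Your additional bookkeeping (coincidence of fibers, horizontal distributions, and holonomy elements) just makes explicit what the paper leaves implicit.
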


\bigskip
\section{
             The bundle
             $U(n)\lra U(n,m)/U(m)\lra D_{n,m}$
           }

To deal with the bundle
$$U(n)\lra U(n,m)/U(m)\lra D_{n,m},$$
we investigate
$$U(n)\lra U(n,m) \lra D_{n,m}.$$

The Lie algebra $\mathfrak{u}(n,m)$ of $U(n,m)$ has the following canonical decomposition:
$$\mathfrak{u}(n,m) = \mathfrak{h} + \mathfrak{m},$$
where
$$
  \mathfrak{h}
  =\mathfrak{u}(n)+ \mathfrak{u}(m)
  =
  \left\{
        \left(
              \begin{array}{cccc}
                A & O_{n \times m} \\
                O_{m \times n} & B \\
              \end{array}
        \right)
        \:  : \:
        A \in \mathfrak{u}(n),\:  B \in \mathfrak{u}(m)  
  \right\}
$$
and
$$
  \mathfrak{m}
  =
  \left\{
        \hat{X} :=   
        \left(
              \begin{array}{cccc}
                O_{n} & X^{*} \\
                X     & O_{m}   \\
              \end{array}
        \right)
        \: : \:
        X \in M_{m,n}(\bbc)  
  \right\}.
$$

\bigskip
  Define an Hermitian inner product $h : \bbc ^m \rightarrow \bbc$ by
$$h(v,w) = v^*\, w,$$
where $v$ and  $w$ are regarded as column vectors.
Then the following lemma is obvious:

\begin{lemma}
\label{lambda}
If a matrix $X \in M_{m,n}(\bbc)$ satisfies $X^{*}X = \lambda I_n$ for some $\lambda \in \bbc,$ then $\lambda$ will be a nonnegtive real number and $\lambda=0$ only if $X$ is trivial. 
\end{lemma}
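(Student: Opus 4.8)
The plan is to exploit the fact that $X^{*}X$ is a Gram matrix, hence Hermitian and positive semidefinite, and to extract $\lambda$ by testing this quadratic form against suitable vectors. The whole statement reduces to the elementary observation that, for any column vector $v \in \bbc^{n}$,
$$
  v^{*}(X^{*}X)v = (Xv)^{*}(Xv) = \|Xv\|^{2} \geq 0,
$$
where $\|\cdot\|$ denotes the standard Euclidean norm on $\bbc^{m}$. This single identity does all the work.

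First I would prove that $\lambda$ is a nonnegative real number. Substituting the hypothesis $X^{*}X = \lambda I_n$ into the left-hand side above and choosing $v = e_k$, the $k$-th standard basis vector of $\bbc^{n}$, I obtain
$$
  \lambda = \lambda\, e_k^{*} e_k = e_k^{*}(\lambda I_n) e_k = e_k^{*}(X^{*}X)e_k = \|X e_k\|^{2}.
$$
Since the right-hand side is manifestly a nonnegative real number (it is the squared norm of the $k$-th column of $X$), so is $\lambda$; in particular its a priori complex value is forced to lie in $\bbr_{\geq 0}$.

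Second, I would settle the statement about $\lambda = 0$. If $\lambda = 0$, then the computation just performed gives $\|X e_k\|^{2} = 0$, hence $X e_k = 0$, for every $k = 1, \dots, n$. As the vectors $X e_k$ are exactly the columns of $X$, every column of $X$ vanishes, so $X = O_{m \times n}$ is trivial, which is the desired conclusion.

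There is no genuine obstacle here: the argument is a one-line consequence of positive semidefiniteness, and the only point requiring any care is to make explicit that $X^{*}X$ being a scalar multiple of the identity lets one read off $\lambda$ directly as a column norm rather than merely bounding it. I would present the two claims in the order above, deriving both from the single displayed identity $v^{*}(X^{*}X)v = \|Xv\|^{2}$.
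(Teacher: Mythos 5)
Your proof is correct, and it is exactly the argument the paper has in mind: the paper offers no written proof (it introduces the Hermitian inner product $h(v,w)=v^{*}w$ immediately before the lemma and declares the result ``obvious''), the intended reasoning being precisely your observation that $\lambda = e_k^{*}(X^{*}X)e_k = \|Xe_k\|^{2} \geq 0$ and that $\lambda=0$ forces every column of $X$ to vanish. Nothing to correct.
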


\bigskip
From the lemma \ref{lambda}, 
we obtain that
\begin{align*}
  U_{m,n}(\bbc)
  &= 
  \{ 
     X \in M_{m,n}(\bbc) 
     \,\, | \,\, 
     X^{*} X = \lambda I_n \text{ for some } \lambda \in \bbc - \{0\}
  \}
  \\
  &=
  \{ 
     X \in M_{m,n}(\bbc) 
     \,\, | \,\, 
     X^{*} X = \lambda I_n \text{ for some } \lambda > 0
  \}.
\end{align*}

\bigskip
\begin{lemma}
\label{calculation}
   Let
   $$
      X = \Big(a^r_k + i b^r_k \Big) \, ,
      Y = \Big(c^r_k + i d^r_k \Big)
      \in M_{m,n}(\bbc)$$
for  $r=1, \cdots, m$, and  $ k=1, \cdots, n$. Suppose that  for their induced
 $\hat{X}, \, \hat{Y} \in \frakm$,
 $$
    [[\hat{X}, \, \hat{Y}], \hat{X}]  = \hat{Z} \in \frakm
 $$
 for some
  $
   Z  =
   \Big(\alpha ^r_k \Big) \in M_{m,n}(\bbc)
  $ for $r=1, \cdots, m$, and  $k=1, \cdots, n$.
  Then we have
 $$
   \alpha ^r_k =
   \sum^{n}_{j=1} (a^r_j + i b^r_j) \big(\!2 h(Y_j, X_k) - h (X_j, Y_k)\big)
   - \sum^{n}_{j=1} (c^r_j + i d^r_j) \, h(X_j, X_k),
 $$
where $X_k$ and $Y_k$ are $k$-column vectors of $X$ and $Y$ for 
$k= 1, \cdots, n,$ respectively.
\end{lemma}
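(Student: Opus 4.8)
The plan is to compute the iterated bracket directly in block form and then read off the matrix entries column by column. Writing
$$
\hat{X} = \begin{pmatrix} O_n & X^* \\ X & O_m\end{pmatrix},
\qquad
\hat{Y} = \begin{pmatrix} O_n & Y^* \\ Y & O_m\end{pmatrix},
$$
the first step is to record that the single bracket is block diagonal, hence lands in $\frakh$:
$$
[\hat{X},\hat{Y}]
= \begin{pmatrix} X^*Y - Y^*X & O \\ O & XY^* - YX^* \end{pmatrix}
=: \begin{pmatrix} P & O \\ O & Q\end{pmatrix},
$$
where $P \in \mathfrak{u}(n)$ and $Q \in \mathfrak{u}(m)$ are skew-Hermitian.

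Next I would bracket again against $\hat{X}$. Because a block-diagonal matrix and an anti-diagonal matrix multiply to an anti-diagonal one, the result lands in $\frakm$ automatically (so the hypothesis $\hat Z\in\frakm$ costs nothing); explicitly
$$
[[\hat{X},\hat{Y}],\hat{X}]
= \begin{pmatrix} O_n & PX^* - X^*Q \\ QX - XP & O_m\end{pmatrix}.
$$
Matching this against the form of $\hat{Z}$ identifies the defining matrix as $Z = QX - XP$, while the upper-right block equals $Z^*$ by skew-Hermiticity of $P,Q$, which is an internal consistency check rather than a genuine step. Substituting $P=X^*Y-Y^*X$ and $Q=XY^*-YX^*$ and expanding, the two $+XY^*X$ contributions (one from $QX$, one from $-XP$) combine to give
$$
Z = 2\,XY^*X - XX^*Y - YX^*X.
$$

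Finally I would pass to entries. Writing $X = [X_1|\cdots|X_n]$ and $Y=[Y_1|\cdots|Y_n]$ by columns, the inner $n\times n$ matrices $X^*Y$, $Y^*X$, $X^*X$ have $(j,k)$ entries $h(X_j,Y_k)$, $h(Y_j,X_k)$, $h(X_j,X_k)$ respectively. Reading off the $(r,k)$ entry of each term of $Z$ — for instance the $k$-th column of $XY^*X$ is $\sum_j X_j\,h(Y_j,X_k)$, whose $r$-th component is $\sum_j (a^r_j + i b^r_j)\,h(Y_j,X_k)$, and similarly for $XX^*Y$ and $YX^*X$ — and then merging the two sums that run over columns of $X$ yields exactly the claimed expression for $\alpha^r_k$. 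The computation is entirely routine; the only point demanding care is tracking the coefficient $2$ on $XY^*X$ and keeping the column indices inside $h$ in their correct order, so I would carry out the column expansion explicitly to guard against transposition errors.
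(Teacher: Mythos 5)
Your proof is correct and is essentially the same argument as the paper's: the paper compresses the entire block computation into the single identity $[[\hat{X},\hat{Y}],\hat{X}] = \hat{X}(2\hat{Y}\hat{X}-\hat{X}\hat{Y})-\hat{Y}\hat{X}\hat{X}$, whose lower-left block is exactly your $Z = 2XY^{*}X - XX^{*}Y - YX^{*}X$, and then reads off the $(r,k)$ entries just as you do. Your intermediate step through the block-diagonal $[\hat{X},\hat{Y}]=\mathrm{diag}(P,Q)$ and $Z = QX - XP$ merely makes explicit what the paper leaves as ``easily obtained.''
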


\begin{proof}
It is easily obtained from
 $$
    [[\hat{X}, \, \hat{Y}], \hat{X}]  
    = \hat{X} (2 \hat{Y}\hat{X} - \hat{X}\hat{Y}) - \hat{Y}\hat{X}\hat{X}. 
 $$
\end{proof}

\bigskip
  Recall the following proposition, which gives the clue for the holonomy displacement in the principal $U(n)$ bundles over $D_{n,m},$ 
$$U(n) \lra U(n,m)/U(m) \stackrel{\pi} \lra D_{n,m}.$$

\begin{proposition} \cite{KN} \label{affine}
 Let $(G,H,\sigma)$ be a symmetric space and
 $\mathfrak{g} = \mathfrak{h} + \mathfrak{m}$
 the canonical decomposition. Then there is a natural one-to-one correspondence between the set of linear subspaces $\mathfrak{m}'$ of $\mathfrak{m}$ such that
$[[\mathfrak{m}', \mathfrak{m}'], \mathfrak{m}'] \subset \mathfrak{m}'$
and the set of complete totally geodesic submanifolds $M'$ through the origin $0$ of the affine symmetric space $M=G/H,$ the correspondence being given by 
$\mathfrak{m}' = T_0 (M').$
\end{proposition}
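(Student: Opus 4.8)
The plan is to establish the two halves of the correspondence separately and then verify that the two constructions are mutually inverse. Throughout I use the defining structure of the canonical decomposition: $d\sigma$ acts as $+1$ on $\mathfrak{h}$ and as $-1$ on $\mathfrak{m}$, whence
\[
  [\mathfrak{h},\mathfrak{h}]\subset\mathfrak{h},\qquad
  [\mathfrak{h},\mathfrak{m}]\subset\mathfrak{m},\qquad
  [\mathfrak{m},\mathfrak{m}]\subset\mathfrak{h}.
\]
In particular $[[\mathfrak{m},\mathfrak{m}],\mathfrak{m}]\subset[\mathfrak{h},\mathfrak{m}]\subset\mathfrak{m}$, so the triple-bracket condition is well posed. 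I identify $T_o M\cong\mathfrak{m}$ at the origin $o=eH$, recall that the geodesics of the canonical connection through $o$ are exactly $t\mapsto\exp(tX)\cdot o$ with $X\in\mathfrak{m}$, and recall the curvature identity $R(X,Y)Z=-[[X,Y],Z]$ for $X,Y,Z\in\mathfrak{m}\cong T_oM$ together with $\nabla R=0$.

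\emph{From a submanifold to a Lie triple system.} Given a complete totally geodesic $M'$ through $o$, set $\mathfrak{m}'=T_oM'\subset\mathfrak{m}$. For a totally geodesic submanifold the second fundamental form vanishes, so the Gauss and Codazzi equations force the tangent space to be invariant under the ambient curvature tensor, $R(\mathfrak{m}',\mathfrak{m}')\mathfrak{m}'\subset\mathfrak{m}'$. By the curvature identity this is precisely $[[\mathfrak{m}',\mathfrak{m}'],\mathfrak{m}']\subset\mathfrak{m}'$, so $\mathfrak{m}'$ is a Lie triple system.

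\emph{From a Lie triple system to a submanifold.} Given $\mathfrak{m}'$ with $[[\mathfrak{m}',\mathfrak{m}'],\mathfrak{m}']\subset\mathfrak{m}'$, put $\mathfrak{h}'=[\mathfrak{m}',\mathfrak{m}']\subset\mathfrak{h}$ and $\mathfrak{g}'=\mathfrak{h}'+\mathfrak{m}'$. First I check that $\mathfrak{g}'$ is a subalgebra: $[\mathfrak{h}',\mathfrak{m}']\subset\mathfrak{m}'$ is the hypothesis, $[\mathfrak{m}',\mathfrak{m}']=\mathfrak{h}'$ by definition, and $[\mathfrak{h}',\mathfrak{h}']\subset\mathfrak{h}'$ follows from the Jacobi identity combined with the hypothesis. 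Since $\mathfrak{h}'\subset\mathfrak{h}$ and $\mathfrak{m}'\subset\mathfrak{m}$, the subalgebra $\mathfrak{g}'$ is $d\sigma$-invariant, so $(\mathfrak{g}',\mathfrak{h}')$ is a symmetric subpair. Let $G'\subset G$ be the connected subgroup with Lie algebra $\mathfrak{g}'$ and set $M'=G'\cdot o$. Then $M'$ is the orbit $G'/H'$, a homogeneous, hence complete, submanifold with $T_oM'=\mathfrak{m}'$, and I claim it is totally geodesic. The geodesics through $o$ tangent to $\mathfrak{m}'$ are $t\mapsto\exp(tX)\cdot o$ with $X\in\mathfrak{m}'\subset\mathfrak{g}'$, which lie in $G'\cdot o=M'$; for an arbitrary $p=g\cdot o\in M'$ with $g\in G'$, the transformation $g$ preserves the canonical connection and maps $M'$ onto itself, so it carries geodesics tangent to $M'$ at $o$ to geodesics tangent to $M'$ at $p$, and these again remain in $M'$. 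Hence every geodesic tangent to $M'$ stays in $M'$, i.e. $M'$ is totally geodesic.

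\emph{Bijectivity and the main obstacle.} The two constructions are mutually inverse: starting from $\mathfrak{m}'$, the orbit construction yields $T_oM'=\mathfrak{m}'$ by design; starting from a complete totally geodesic $M'$, completeness and total geodesy mean $M'$ is swept out by the geodesics $\exp(tX)\cdot o$ with $X\in T_oM'$, which is exactly the orbit attached to $\mathfrak{m}'=T_oM'$. I expect the main obstacle to be the total geodesy in the Lie-triple-system-to-submanifold direction: one must pass from the purely algebraic closure $[[\mathfrak{m}',\mathfrak{m}'],\mathfrak{m}']\subset\mathfrak{m}'$ to a \emph{global} statement about the orbit $G'\cdot o$. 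The clean route is to exploit that $G'$ acts on $M$ by affine, connection-preserving transformations, so that total geodesy need only be verified at the single point $o$ and is then propagated to all of $M'$ by homogeneity.
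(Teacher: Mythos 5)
The paper offers no proof of this proposition at all --- it is quoted verbatim from Kobayashi--Nomizu \cite{KN} (Vol.~II, Ch.~XI) --- so the comparison is with the standard proof in that source, and your argument is essentially that proof: in one direction, curvature invariance of the tangent space of a totally geodesic submanifold together with the identity $R(X,Y)Z=-[[X,Y],Z]$ at the origin; in the other, the subalgebra $\mathfrak{g}'=[\mathfrak{m}',\mathfrak{m}']+\mathfrak{m}'$ (your Jacobi-identity check of $[\mathfrak{h}',\mathfrak{h}']\subset\mathfrak{h}'$ is correct), the orbit $M'=G'\cdot o$, and propagation of total geodesy from $o$ to all of $M'$ by the affine action of $G'$. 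Two stylistic points: the proposition lives in the \emph{affine} category (the canonical connection), where ``second fundamental form'' and Gauss--Codazzi are Riemannian language; the cleaner affine argument is simply that for a totally geodesic submanifold the ambient covariant derivative of tangent fields stays tangent, so $R(X,Y)Z$ computed from tangent fields is tangent.

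There is one step that does not hold as you stated it, namely the injectivity half of the bijection. You assert that completeness and total geodesy mean $M'$ ``is swept out by the geodesics $\exp(tX)\cdot o$ with $X\in T_oM'$.'' For an affine symmetric space, completeness does not make $\exp_o$ surjective --- de Sitter and anti-de Sitter spaces are complete affine symmetric spaces in which not every point is joined to $o$ by a geodesic --- so while $M'$ \emph{contains} every geodesic from $o$ tangent to $\mathfrak{m}'$, it need not equal their union, and uniqueness of the complete totally geodesic submanifold with prescribed $T_oM'$ does not follow in one line. The standard repair: join an arbitrary point of the connected manifold $M'$ to $o$ by a broken geodesic of $M'$, and propagate along the segments, via a local-uniqueness (open--closed) argument, both the inclusion $M'\subset G'\cdot o$ and the equality of tangent spaces $T_pM'=(dg)_o\mathfrak{m}'$ at each intermediate point $p=g\cdot o$. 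In the Riemannian situation actually used in this paper (the spaces $D_{n,m}$, where the metric is positive definite on $\mathfrak{m}$), Hopf--Rinow applied to $M'$ does supply a connecting geodesic, and then your one-line argument is complete; but as a proof of the proposition in the generality stated, this gap needs the continuation argument.
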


Note that $\mathfrak{m}'$ in the Proposition \ref{affine} will make a bunch of complete totally geodesic submanifolds, each of which is obtained from another one by a translation, in the affine symmetric space $G/H.$

\bigskip
The role of $U_{m,n}(\bbc)$ in this paper will be seen from now on.

\bigskip

\begin{theorem} \label{easy}
Given 
$X \in U_{m,n}(\bbc)$ and
the natural fibration
$$
 U(n) \times U(m) \ra U(n,m) \ra D_{n,m},
$$
assume a 2-dimensional subspace 
$\frakm' = \text{Span}_{\bbr} \{ \hat{X}, \hat{Y}\} $ of
$\frakm\subset {\mathfrak u}(n,m)$
satisfies
\begin{align}
   X^* \, X = \lambda I_n, \quad  X^* \, Y = \mu I_n, \qquad 
    \mu \in \bbc \label{STAR}
\end{align}
for  $ Y \in M_{m,n}(\bbc).$
Then $\frakm'$ gives rise to a complete totally geodesic  surface $S$ in 
$D_{n,m}$ 
if and only if 
either
($ \text{Im}\,{\mu} \not= 0$ and $iX \in \text{Span}_{\bbr}\{ X, Y \}$)
or
($ \text{Im}\,{\mu} = 0 $ and $Y \in U_{m,n}(\bbc)$)
holds.
\end{theorem}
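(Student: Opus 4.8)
The plan is to reduce the statement to a purely Lie-algebraic closure condition via Proposition \ref{affine}, and then to compute the relevant brackets explicitly using Lemma \ref{calculation}. By Proposition \ref{affine}, $\frakm'$ gives rise to a complete totally geodesic surface in $D_{n,m}$ precisely when $[[\frakm',\frakm'],\frakm']\subset\frakm'$. Since $\frakm'=\text{Span}_{\bbr}\{\hat X,\hat Y\}$ is two-dimensional, $[\frakm',\frakm']$ is spanned by the single element $[\hat X,\hat Y]$, and hence $[[\frakm',\frakm'],\frakm']$ is spanned by the two iterated brackets $[[\hat X,\hat Y],\hat X]$ and $[[\hat X,\hat Y],\hat Y]$. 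Thus the whole theorem reduces to determining exactly when both of these lie in $\frakm'$.

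For the first bracket I would feed the hypotheses $X^{*}X=\lambda I_n$ and $X^{*}Y=\mu I_n$ (so that also $Y^{*}X=\bar\mu I_n$) into Lemma \ref{calculation}. Each inner product $h(X_j,X_k)$, $h(X_j,Y_k)$, $h(Y_j,X_k)$ collapses to a scalar multiple of $\delta_{jk}$, and the formula yields $[[\hat X,\hat Y],\hat X]=\hat Z$ with $Z=(2\bar\mu-\mu)X-\lambda Y$. Writing $2\bar\mu-\mu=\text{Re}\,\mu-3i\,\text{Im}\,\mu$, the part $(\text{Re}\,\mu)X-\lambda Y$ already lies in $\text{Span}_{\bbr}\{X,Y\}$, so $\hat Z\in\frakm'$ if and only if the leftover term $-3\,\text{Im}\,\mu\,(iX)$ does; that is, if and only if $\text{Im}\,\mu=0$ or $iX\in\text{Span}_{\bbr}\{X,Y\}$. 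For the second bracket I would apply the same lemma after swapping the roles of $X$ and $Y$ and using antisymmetry, obtaining $[[\hat X,\hat Y],\hat Y]=\widehat{X(Y^{*}Y)-(2\mu-\bar\mu)Y}$. Here the essential asymmetry surfaces: the hypothesis controls $X^{*}X$ and $X^{*}Y$ but says nothing about $Y^{*}Y$, so this bracket genuinely involves the matrix $N:=Y^{*}Y$.

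This asymmetry is where the real content lies, and it is the step I expect to be the main obstacle. For the ``only if'' direction I would split on $\text{Im}\,\mu$. If $\text{Im}\,\mu\neq 0$, the first-bracket condition alone forces $iX\in\text{Span}_{\bbr}\{X,Y\}$, which is exactly the first alternative. If $\text{Im}\,\mu=0$, then $2\mu-\bar\mu=\mu$ is real, so the second-bracket condition reduces to $XN\in\text{Span}_{\bbr}\{X,Y\}$; left-multiplying the relation $XN=pX+qY$ (with $p,q\in\bbr$) by $X^{*}$ and using $X^{*}X=\lambda I_n$, $X^{*}Y=\mu I_n$ gives $\lambda N=(p\lambda+q\mu)I_n$, so $N=Y^{*}Y$ is a scalar multiple of $I_n$ and hence $Y\in U_{m,n}(\bbc)$ (the scalar is positive by Lemma \ref{lambda}, and $Y\neq 0$ since $\frakm'$ is two-dimensional). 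This is the second alternative, completing ``only if''.

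For the ``if'' direction I would verify closure in each case. When $\text{Im}\,\mu\neq 0$ and $iX\in\text{Span}_{\bbr}\{X,Y\}$, the two-dimensional real spaces $\text{Span}_{\bbr}\{X,Y\}$ and $\text{Span}_{\bbr}\{X,iX\}$ coincide, so $\text{Span}_{\bbr}\{X,Y\}$ is the complex line $\bbc X$; then $Y=cX$, $Y^{*}Y=|c|^{2}\lambda I_n$, and both bracket formulas manifestly return elements of $\bbc X=\frakm'$ (this is the ``complex submanifold'' case). When $\text{Im}\,\mu=0$ and $Y\in U_{m,n}(\bbc)$, writing $Y^{*}Y=\nu I_n$ the brackets become $Z=\mu X-\lambda Y$ and $\widehat{\nu X-\mu Y}$, both plainly in $\frakm'$. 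This exhausts the two alternatives and matches them to the two constructions in the statement.
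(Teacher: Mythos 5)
Your proposal is correct and follows essentially the same route as the paper: both reduce the statement via Proposition \ref{affine} to the closure condition $[[\frakm',\frakm'],\frakm']\subset\frakm'$, compute the two iterated brackets with Lemma \ref{calculation} (your formulas $(2\bar\mu-\mu)X-\lambda Y$ and $X(Y^{*}Y)-(2\mu-\bar\mu)Y$ agree with the paper's $MX$, $MY$ computations, where $M=XY^{*}-YX^{*}$), and split into the same cases on $\mathrm{Im}\,\mu$, including the identical $X^{*}$-multiplication trick to force $Y^{*}Y$ to be a scalar matrix. The only cosmetic difference is that you compute both brackets in general before the case split and handle the $\mathrm{Im}\,\mu\neq 0$ sufficiency by writing $Y=cX$, whereas the paper switches to the basis $\{\hat X,\widehat{iX}\}$; these are equivalent.
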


\begin{proof}
 To begin with, note that $\lambda >0.$
 Assume that  $\frakm'$ gives rise to a complete totally geodesic  surface $S$  in $D_{n,m}$.
By a translation, without loss of generality, we can assume that $S$ passes through the origin of the affine symmetric space 
$D_{n,m} = U(n,m)/\left(U(n) \times U(m)\right).$

 To show the necessary condition: let $e_k \in \bbc^m, \: k=1, \cdots, m,$ be an elementary vector which has all components 0 except for the $k$-component with 1. Then
  $$h(X_k, Y_j) = h(X e_k, Y e_j) = e_k ^* (X^* Y) e_j ,$$
  so the condition (\ref{STAR}) is equivalent to
  $$
     h(X_k, Y_k) = \mu, \quad
     h(X_k, X_k) = \lambda, \quad
     h(X_k, X_j) =0 , \quad
     h(X_k, Y_j) =0
  $$
  \noindent  
  for $k \not= j $ in $\{1, \cdots ,n \}$.  From $ h(X_k, Y_k) = \mu $,
 we obtain
  $$
    2h(Y_k, X_k) - h(X_k, Y_k) =
     \text{Re} \mu - 3i \text{Im}  \mu.
  $$
  \noindent
   The hypothesis of totally geodesic and Proposition \ref{affine} say that \\
$$ a \hat{X} + b \hat{Y} = [[\hat{X}, \hat{Y}], \hat{X}] $$
for some $a, b \in \bbr$. 
So, from Lemma ~\ref{calculation},  
  \begin{align*}
    a X + b Y
    &= (\text{Re}\mu - 3i \text{Im}\mu) X - \lambda Y
    \\
    &=  -3 \text{Im}\mu (i X)
          + (\text{Re}\mu  X - \lambda Y) ,        
  \end{align*}
and then $\text{Im}\mu \not= 0$ implies 
$i X$ will lie in $\text{Span}_{\bbr} \{ X, Y \} $ 
and that
$\widehat{iX}$ will lie in 
$\text{Span}_{\bbr} \{\hat{X}, \hat{Y} \} = \frakm' \subset \mathfrak{u}(n,m)$.

  If $\text{Im}\mu = 0$, then
  $$
    X^* Y -  Y^* X =  X^* Y - (X^* Y)^*
    =   2 i \text{Im}\mu \, I_n = O_n,
  $$ 
  \noindent 
  so
  \begin{align}
  [\hat{X}, \hat{Y}] =
  \left[
          \begin{array}{cc}
            O_n & 0 \\ 0 &  X Y^* - Y X^*
          \end{array}
  \right]\
  \in \mathfrak{u} (m) \subset \mathfrak{u} (n,m).
  \label{su_M}
  \end{align}
  Let $M =  X Y^* - Y X^*$. Then
  $$
  [\hat{X}, \hat{Y}] =
  \left[
          \begin{array}{cc}
            O_n & 0 \\ 0 & M       
          \end{array}
  \right]    
  $$
  and
  $ [[\hat{Y}, \hat{X}], \hat{Y}] = - \widehat{MY} \in \frakm'$
  from the hypothesis of the condition of totally geodesic and from
  Proposition \ref{affine}.
  Note that
  $$
     MY =  XY^*Y - Y X^*Y = XY^*Y - Y \mu I_n
           = XY^*Y- (\text{Re}u) Y.
  $$
  Thus $XY^*Y = aX + bY$  for some $a, b \in \bbr $. Then
  $$
    \lambda Y^*Y = X^*(XY^*Y)
    = X^*( aX + bY) = (a \lambda + b \text{Re}\mu) I_n
  $$
  and so
  $$
    Y^*Y= \tfrac{ a \lambda + b \text{Re}\mu}{\lambda}I_n, \quad 
              \tfrac{ a \lambda + b \text{Re}\mu}{\lambda}  \in \bbr.
  $$
 Since $\mathfrak{m}'= \text{Span}_{\bbr} \{ \hat{X}, \hat{Y}\}$ is 2-dimensional, $Y$ is not a zero matrix and so from Lemma \ref{lambda}, 
 $Y \in U_{m,n}(\bbc)$.

\medskip
Conversely, assume the necessary part holds.

Assume  $\text{Im}\mu = 0$ and
$Y^{*}Y = \eta I_n$ for some $\eta > 0.$ 
Then, 
  $$
  [\hat{X}, \hat{Y}] =
  \left[
          \begin{array}{cc}
            O_n & 0 \\ 0 & M
          \end{array}
  \right]\ , \,\,
  [[\hat{X}, \hat{Y}], \hat{X}] = \widehat{MX} \,\,
  \text{ and } \,\,
  [[\hat{Y}, \hat{X}], \hat{Y}] = -\widehat{MY},
  $$  
  where $M = X Y^* - Y X^*$. 
  Note it suffices to show that 
  $[[\hat{X}, \hat{Y}], \hat{X}] \in \frakm'$  and
   $[[\hat{Y}, \hat{X}], \hat{Y}] \in \frakm'$.
  Since
  $$
    MX =  XY^*X - Y X^*X = X \bar{\mu} I_n - Y \lambda I_n
          = \text{Re}\mu X - \lambda Y,
  $$ \noindent  we get  $[[\hat{X}, \hat{Y}], \hat{X}] \in \frakm'$.
  We also get $[[\hat{Y}, \hat{X}], \hat{Y}] \in \frakm'$ since
  $$
     MY =  XY^*Y - Y X^*Y
           = X \eta I_n - Y \mu I_n
           = \eta X - \text{Re}\mu Y.
  $$

 Assume $ \text{Im}\,{\mu} \not= 0$ and $iX \in \text{Span}_{\bbr}\{ X, Y \}.$
 Since 
 $$ \text{Span}_{\bbr}\{ X, Y \} =  \text{Span}_{\bbr}\{ X, iX \}$$
 and
 $$
   \frakm' = \text{Span}_{\bbr} \{ \hat{X}, \hat{Y}\} 
           = \text{Span}_{\bbr} \{ \hat{X}, \widehat{iX}\} ,
 $$
 it suffices to show that $[[\hat{X}, \widehat{iX}], \hat{X}] \in \frakm'$  and
 $[[\widehat{iX}, \hat{X}], \widehat{iX}] \in \frakm'.$ 
 From $$h(-iv,w) = i v^{*}w = h (v,iw)$$ and from Lemma \ref{calculation},
 it is easily obtained that
 $$
   [[\hat{X},\widehat{iX}],\hat{X}]    =  -4 \lambda \widehat{iX}
 $$
 and that 
 $$
   [[\widehat{iX},\hat{X}],\widehat{iX}]   = - 4\lambda\hat{X} .
 $$
\end{proof}

\bigskip
\begin{corollary} \label{cor}
Given $X \in U_{m,n}(\bbc)$ and $Y \in M_{m,n}(\bbc)$ with
$X^* \, Y = \mu I_n$ 
for some  $\mu \in \bbc,$
and given the natural fibration
$
 U(n) \times U(m) \ra U(n,m) \ra D_{n,m},
$
assume $\frakm' = \text{Span}_{\bbr} \{ \hat{X}, \hat{Y}\} $ produce a 2-dimensional subspace of $\frakm\subset {\mathfrak u}(n,m).$ 
If $\frakm'$ gives rise to a complete totally geodesic  surface $S$ in 
$D_{n,m},$ then  $Y \in U_{m,n}(\bbc)$ and
$\mu$ determines whether $S$ is a complex submanifold or not.
\end{corollary}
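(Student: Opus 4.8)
The plan is to read the Corollary off Theorem \ref{easy}. Since $X \in U_{m,n}(\bbc)$ gives $X^{*}X = \lambda I_n$ with $\lambda > 0$, and $X^{*}Y = \mu I_n$, the hypotheses of Theorem \ref{easy} are met; so the assumption that $\frakm'$ gives rise to a complete totally geodesic surface forces exactly one of its two alternatives: either $\text{Im}\,\mu \neq 0$ together with $iX \in \text{Span}_{\bbr}\{X,Y\}$, or $\text{Im}\,\mu = 0$ together with $Y \in U_{m,n}(\bbc)$. These two alternatives are mutually exclusive and are separated precisely by the vanishing of $\text{Im}\,\mu$, which is the mechanism by which $\mu$ will control the geometry. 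The two assertions to prove are then (i) $Y \in U_{m,n}(\bbc)$ in both cases, and (ii) $S$ is a complex submanifold exactly when $\text{Im}\,\mu \neq 0$.

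For (i), the case $\text{Im}\,\mu = 0$ is immediate, since Theorem \ref{easy} already delivers $Y \in U_{m,n}(\bbc)$. In the case $\text{Im}\,\mu \neq 0$ I would exploit $iX \in \text{Span}_{\bbr}\{X,Y\}$: writing $iX = \alpha X + \beta Y$ with $\alpha,\beta \in \bbr$, one checks $\beta \neq 0$ (otherwise $(i-\alpha)X = 0$ with $X \neq 0$, impossible), whence $Y = cX$ for the nonzero scalar $c = (i-\alpha)/\beta \in \bbc$. Then $Y^{*}Y = |c|^{2}\lambda I_n$ with $|c|^{2}\lambda > 0$, so $Y \in U_{m,n}(\bbc)$ by Lemma \ref{lambda}. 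As a by-product $\mu = X^{*}Y = c\lambda$, so $\text{Im}\,\mu = \lambda/\beta \neq 0$, consistent with the case assumption.

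For (ii), I first need the canonical complex structure $J$ of the Hermitian symmetric space $D_{n,m}$ at the origin. It is induced by the adjoint action of the central element of $\frakh = \mathfrak{u}(n)+\mathfrak{u}(m)$ of the form $\diag(ai I_n, bi I_m)$ with $b - a = 1$; a direct bracket computation gives $J\hat{X} = [\,\diag(ai I_n,bi I_m),\hat{X}\,] = \widehat{iX}$, so that $J$ is multiplication by $i$ on $\frakm \cong M_{m,n}(\bbc)$ and $J^{2}=-\id$. Since $\frakm'$ is the tangent space of $S$ at the origin and $D_{n,m}$ is K\"ahler with $S$ totally geodesic, $S$ is a complex submanifold if and only if $\frakm'$ is $J$-invariant ($J$-invariance at the origin propagates along $S$ by parallel transport); and because $\dim_{\bbr}\frakm' = 2$ this is equivalent to $J\hat{X} = \widehat{iX} \in \frakm'$, i.e. to $iX \in \text{Span}_{\bbr}\{X,Y\}$. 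When $\text{Im}\,\mu \neq 0$ this inclusion holds by Theorem \ref{easy}, so $S$ is complex. When $\text{Im}\,\mu = 0$ the inclusion must fail: were $iX \in \text{Span}_{\bbr}\{X,Y\}$, the computation in (i) would give $Y = cX$ with $\text{Im}\,c \neq 0$ and hence $\text{Im}\,\mu = \lambda\,\text{Im}\,c \neq 0$, a contradiction. Thus $S$ is a complex submanifold precisely when $\text{Im}\,\mu \neq 0$, so $\mu$ determines whether $S$ is complex.

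The one step that needs genuine care, rather than bookkeeping, is the identification of $J$ and the reduction of ``complex submanifold'' to the algebraic condition $iX \in \text{Span}_{\bbr}\{X,Y\}$; everything else is short linear algebra riding on Theorem \ref{easy}. I would be careful to use the normalization of the central element compatible with the chosen invariant metric, so that $J\hat{X} = \widehat{iX}$, though the overall sign is irrelevant for $J$-invariance and hence for the conclusion.
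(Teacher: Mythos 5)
Your proof is correct and follows the route the paper intends: the corollary is stated in the paper without proof, as an immediate consequence of Theorem \ref{easy}, and your argument is precisely that deduction. The two details you supply beyond the paper's dichotomy --- that in the case $\mathrm{Im}\,\mu \neq 0$ one gets $Y = cX$ and hence $Y \in U_{m,n}(\bbc)$ by Lemma \ref{lambda}, and that identifying the complex structure $J$ with multiplication by $i$ on $\frakm$ reduces ``$S$ is a complex submanifold'' to the algebraic condition $iX \in \mathrm{Span}_{\bbr}\{X,Y\}$ --- are exactly the steps the paper leaves implicit, and you handle both correctly.
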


\medskip
\begin{corollary}
Given $X,Y \in U_{m,n}(\bbc)$ and given the natural fibration
$
 U(n) \times U(m) \ra U(n,m) \ra D_{n,m},
$
assume $\frakm' = \text{Span}_{\bbr} \{ \hat{X}, \hat{Y}\} $ produce a 2-dimensional subspace of $\frakm\subset {\mathfrak u}(n,m).$ 
If $X^* \, Y = \mu I_n \text{ for some } \mu \in \bbr,$
then 
$\frakm'$ will give rise to a complete totally geodesic  surface $S$ in 
$D_{n,m}$ 
\end{corollary}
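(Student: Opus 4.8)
The plan is to recognize this statement as the fully real special case of Theorem~\ref{easy}, so that essentially nothing new needs to be proved: the entire content is checking that the present hypotheses feed exactly into the sufficiency direction of that theorem. First I would record that, because $X \in U_{m,n}(\bbc)$, the defining relation gives $X^{*}X = \lambda I_n$ for some $\lambda > 0$ (this is the content of Lemma~\ref{lambda}); this is precisely the first equation in the condition~\eqref{STAR} required by Theorem~\ref{easy}. Likewise the hypothesis $X^{*}Y = \mu I_n$ supplies the second equation of~\eqref{STAR}, and since we are told $\mu \in \bbr$ rather than merely $\mu \in \bbc$, we have $\text{Im}\,\mu = 0$.

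Next I would observe that $Y \in U_{m,n}(\bbc)$ holds by assumption. Thus the pair $(X,Y)$ satisfies both clauses of the second alternative in the characterization of Theorem~\ref{easy}, namely ``$\text{Im}\,\mu = 0$ and $Y \in U_{m,n}(\bbc)$.'' By the sufficiency (``if'') direction of that theorem, $\frakm' = \text{Span}_{\bbr}\{\hat{X},\hat{Y}\}$ therefore gives rise to a complete totally geodesic surface $S$ in $D_{n,m}$, which is exactly the desired conclusion.

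Because the result is a direct corollary, I do not expect any genuine obstacle; the only point requiring a moment's care is confirming that all the ambient hypotheses of Theorem~\ref{easy} --- in particular $X \in U_{m,n}(\bbc)$, $Y \in M_{m,n}(\bbc)$, and the two-dimensionality of $\frakm'$ --- are inherited verbatim here, the inclusion $U_{m,n}(\bbc) \subset M_{m,n}(\bbc)$ handling the membership of $Y$. If one prefers a self-contained argument bypassing the citation, one can simply reproduce the short computation from the converse part of Theorem~\ref{easy}: writing $M = XY^{*} - YX^{*}$ and using $X^{*}X = \lambda I_n$, $Y^{*}Y = \eta I_n$, $X^{*}Y = \mu I_n$ with $\mu = \text{Re}\,\mu$, one checks $MX = \text{Re}\,\mu\, X - \lambda Y$ and $MY = \eta X - \text{Re}\,\mu\, Y$, so that $[[\hat{X},\hat{Y}],\hat{X}] = \widehat{MX}$ and $[[\hat{Y},\hat{X}],\hat{Y}] = -\widehat{MY}$ both lie in $\frakm'$, whence Proposition~\ref{affine} yields the complete totally geodesic surface.
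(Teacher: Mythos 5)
Your proposal is correct and matches the paper's approach exactly: the paper offers no separate proof for this corollary precisely because it is the ``$\mathrm{Im}\,\mu = 0$ and $Y \in U_{m,n}(\bbc)$'' alternative of the sufficiency direction of Theorem~\ref{easy}, which is how you argue. Your optional self-contained computation ($MX = \mathrm{Re}\,\mu\, X - \lambda Y$, $MY = \eta X - \mathrm{Re}\,\mu\, Y$) reproduces verbatim the converse part of the paper's proof of that theorem, so there is nothing to add.
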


\medskip
\begin{remark}
Note that 
$\bbc^n \hookrightarrow \bbc^n \times \{0\} \subset \bbc^{n+m}$
is an $n$-dimensional subspace such that 
$F(v,v) \le 0$ for every $v \in \bbc^n$
and that  
$\bbc^m \hookrightarrow \{0\} \times \bbc^m   \subset \bbc^{n+m}$
is an $m$-dimensional subspace such that 
$F(w,w) \ge 0$ for every $w \in \bbc^m.$
Given $X \in U_{m,n}(\bbc), $ if $n \le m, $ then 
$X: (\bbc^n,  h_{\bbc^n}) \ra (\bbc^m,  h_{\bbc^m})$ 
is a conformal one-one linear map, 
where $h_{\bbc^k}$ is an Hermitian on
$\bbc^k, \: k= 1,2, \cdots$, given by
$$
   h_{\bbc^k}(u_1,u_2) = u_1^* u_2 \quad \text{ for } u_1, u_2 \in \bbc^k.
$$
In view of
$\hat{X} \in {\mathfrak u}(n,m) \subset \text{End}(\bbc^{n+m})$,
$\hat{X}$ sends the subspace $\bbc^n$ to the subspace $\bbc^m$
and satisfies
$$F(\hat{X}v_1, \hat{X}v_2) 
   = - \lambda  \: F(v_1, v_2)
$$
for $ v_1, v_2 \in \bbc^n.$
And the condition of the relation between $X$ and $Y$ in Theorem ~\ref{easy} says that
$$
   F(\hat{X}v_1, \hat{Y}v_2)
   = h_{\bbc^m}(Xv_1, Yv_2) = \mu \: h_{\bbc^n}(v_1,v_2)
   = - \mu \: F(v_1, v_2) \qquad
$$
for $ v_1, v_2 \in \bbc^n.$
\end{remark}

\bigskip

 When $n=1$, the condition (\ref{STAR}) is satisfied automatically for any two vectors in $\bbc^m$ by identifying
$M_{m,1}(\bbc)$ with $\bbc^m$.
So we get

\begin{corollary}
\label{geod-cond-cpn}
A 2-dimensional subspace $\frakm'$ of
$\frakm\subset {\mathfrak u}(1,m)$
gives rise to a complete totally geodesic submanifold in the affine symmetric space
${\bbc}H^m = U(1,m)/ \left(U(1) \times U(m) \right)$
if and only if 
$\frakm'$ has two linearly independent tangent vectors $\hat{w}_1$ and $\hat{w}_2$ such that
either $w_2 = i w_1$
or
$\emph{Im}h_{\bbc^m}(w_1, w_2) =0$
holds.
\end{corollary}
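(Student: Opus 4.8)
The plan is to obtain this as the special case $n=1$ of Theorem~\ref{easy}, exploiting the observation recorded in the remark immediately above. When $n=1$ we identify $M_{m,1}(\bbc)$ with $\bbc^m$, and for a column vector $w\in\bbc^m$ the defining relation $w^*w=\lambda I_1$ is just $|w|^2=\lambda$, which by Lemma~\ref{lambda} holds automatically for every nonzero $w$. Hence $U_{m,1}(\bbc)=\bbc^m\setminus\{\bzero\}$, and for any two vectors $w_1,w_2$ the condition (\ref{STAR}) is automatic with $\mu=h_{\bbc^m}(w_1,w_2)=w_1^*w_2$. So first I would fix an arbitrary real basis $\hat{w}_1,\hat{w}_2$ of the given $2$-dimensional $\frakm'$ and set $X=w_1$, $Y=w_2$; these then satisfy the hypotheses of Theorem~\ref{easy}.

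Applying Theorem~\ref{easy} directly, $\frakm'$ gives rise to a complete totally geodesic surface if and only if either ($\text{Im}\,\mu\neq0$ and $iw_1\in\text{Span}_{\bbr}\{w_1,w_2\}$) or ($\text{Im}\,\mu=0$ and $w_2\in U_{m,1}(\bbc)$). The second clause simplifies at once: since every nonzero vector lies in $U_{m,1}(\bbc)$, the requirement $w_2\in U_{m,1}(\bbc)$ is vacuous, so this clause reduces to $\text{Im}\,h_{\bbc^m}(w_1,w_2)=0$, which is exactly the second alternative of the corollary.

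The remaining work is to show the first clause is equivalent, after a change of basis, to the existence of a basis with $w_2=iw_1$. I would argue that $\text{Im}\,\mu\neq0$ together with $iw_1\in\text{Span}_{\bbr}\{w_1,w_2\}$ forces $\text{Span}_{\bbr}\{w_1,w_2\}=\text{Span}_{\bbr}\{w_1,iw_1\}$: indeed $iw_1$ is a real combination of $w_1,w_2$ and is not a real multiple of $w_1$, because $h_{\bbc^m}(w_1,iw_1)=i|w_1|^2$ is not real, so $w_1,iw_1$ are $\bbr$-linearly independent and already span the plane. I may therefore replace $(w_1,w_2)$ by $(w_1,iw_1)$, realizing $w_2=iw_1$. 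Conversely, for the backward direction I would feed each alternative back into Theorem~\ref{easy}: a basis with $w_2=iw_1$ gives $\mu=h_{\bbc^m}(w_1,iw_1)=i|w_1|^2$, so $\text{Im}\,\mu=|w_1|^2\neq0$ and $iw_1\in\frakm'$, verifying the first clause; a basis with $\text{Im}\,h_{\bbc^m}(w_1,w_2)=0$ gives $\text{Im}\,\mu=0$ with $w_2\in U_{m,1}(\bbc)$ automatic, verifying the second. Either way Theorem~\ref{easy} yields a complete totally geodesic surface.

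The one point demanding care---really the only obstacle---is the existential phrasing of the corollary versus the basis-dependent statement of Theorem~\ref{easy}. Concretely, I must check that in the complex-line case the alternative $w_2=iw_1$ can always be arranged, and that in the necessity direction the conclusion of Theorem~\ref{easy}, stated for the chosen basis $(w_1,w_2)$, produces exactly one of the two geometric alternatives. Once this bookkeeping between ``for the given basis'' and ``for some basis'' is handled, the equivalence is immediate. As a consistency check, this dichotomy matches Corollary~\ref{cor}: the case $w_2=iw_1$ is precisely where $S$ is a complex submanifold, whereas $\text{Im}\,h_{\bbc^m}(w_1,w_2)=0$ gives the real, non-complex case.
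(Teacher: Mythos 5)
Your proposal is correct and follows exactly the paper's route: the paper derives this corollary from Theorem~\ref{easy} by observing that for $n=1$ the condition (\ref{STAR}) holds automatically for any two vectors in $\bbc^m$ (so that $U_{m,1}(\bbc)=\bbc^m\setminus\{\bzero\}$ makes the clause $Y\in U_{m,1}(\bbc)$ vacuous), which is precisely your argument. Your additional bookkeeping---showing that when $\text{Im}\,\mu\neq 0$ and $iw_1\in\text{Span}_{\bbr}\{w_1,w_2\}$ one may replace the basis by $(w_1,iw_1)$---is a careful spelling-out of the step the paper leaves implicit, not a different method.
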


\bigskip
We return to the bundle
$U(n) \ra U(n,m)/U(m) \stackrel{\pi}\lra D_{n,m}$.
Any submanifold $A \subset D_{n,m}$ induces a bundle
$U(n) \ra \pi^{-1}(A) \ra A$,
which is immersed in the original bundle and diffeomorphic to the pullback bundle with respect to the inclusion of $A$ into $D_{n,m}$. In fact, in the bundle
$U(n)\times U(m) \ra U(n,m) \stackrel{\tilde{\pi}}\lra D_{n,m}$, the induced distribution in $\tilde{\pi}^{-1}(A)$
from ${\mathfrak u}(m)$ in $U(n,m)$ is integrable and preserved by the right multiplication of $U(n),$ and produces the bundle $U(n) \ra \pi^{-1}(A) \ra A$.

\begin{theorem}
\label{geod-cond-sphere1}
Assume that a complete totally geodesic  surface $S$ in $D_{n,m}$ is induced by a 2-dimensional subspace $\mathfrak{m}' \subset \mathfrak{m}$ with the necessary condition in Theorem ~\ref{easy} satisfied.
In case of 
$\emph{Im}\mu = 0,$ 
the bundle
$U(n) \ra \pi^{-1}(S) \ra S$,
which is immersed in the original bundle
$U(n) \ra U(n,m)/U(m) \stackrel{\pi}\lra D_{n,m}$,
is flat.
\end{theorem}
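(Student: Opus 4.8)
The plan is to exhibit the natural connection on $U(n) \ra \pi^{-1}(S) \ra S$ explicitly and to show that its curvature $2$-form vanishes. Since the left-invariant metric on $U(n,m)$ makes $\frakm$ the orthogonal complement of $\frakh = \mathfrak{u}(n) + \mathfrak{u}(m)$, and $\frakm$ is $\mathrm{Ad}(U(n)\times U(m))$-invariant, the left translates $L_{g*}\frakm$ form the horizontal distribution of a principal connection on $U(n)\times U(m) \ra U(n,m) \ra D_{n,m}$; this is the canonical connection of the symmetric space, whose curvature on horizontal vectors $\hat{X}, \hat{Y} \in \frakm$ is, up to sign, the $\frakh$-component of $[\hat{X}, \hat{Y}]$, and since $[\frakm, \frakm] \subset \frakh$ this is simply $[\hat{X}, \hat{Y}]$. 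As recalled in the remark preceding the statement, forming $U(n,m)/U(m)$ integrates out the $\mathfrak{u}(m)$-direction, so this connection descends to the $U(n)$-bundle $U(n) \ra U(n,m)/U(m) \ra D_{n,m}$ with connection form the $\mathfrak{u}(n)$-valued part; its curvature on the horizontal pair $\hat{X},\hat{Y}$ is, up to sign, the $\mathfrak{u}(n)$-component of $[\hat{X}, \hat{Y}]$.

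I would then restrict to $S$. Its tangent plane at the origin is $\frakm' = \mathrm{Span}_{\bbr}\{\hat{X}, \hat{Y}\}$, so a frame for the horizontal distribution over $S$ is given by the horizontal lifts of $\hat{X}$ and $\hat{Y}$. Because $S$ is a surface, flatness of the restricted $U(n)$-bundle is equivalent to the vanishing of the single curvature value on this pair, namely $[\hat{X}, \hat{Y}]_{\mathfrak{u}(n)}$. Moreover, since the whole construction is $U(n,m)$-invariant and $S$ is homogeneous under the subgroup whose Lie algebra is $\frakm' + [\frakm', \frakm']$, it suffices to verify this vanishing at the origin.

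Here the needed computation is already in hand. By~(\ref{su_M}), in the case $\mathrm{Im}\,\mu = 0$ we have
$$
[\hat{X}, \hat{Y}] =
\left[\begin{array}{cc} O_n & 0 \\ 0 & M \end{array}\right] \in \mathfrak{u}(m),
\qquad M = XY^* - YX^*,
$$
so its $\mathfrak{u}(n)$-component is zero. Hence the curvature of the $U(n)$-connection vanishes on $S$, and the bundle $U(n) \ra \pi^{-1}(S) \ra S$ is flat.

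The step I expect to be the main obstacle is the bookkeeping of the structure-group reduction from $U(n)\times U(m)$ to $U(n)$: one must justify that the $\mathfrak{u}(m)$-block $M$ of $[\hat{X}, \hat{Y}]$, which is in general nonzero, genuinely contributes nothing to the holonomy of the $U(n)$-bundle. The resolution is precisely that the $\mathfrak{u}(m)$-direction is the fiber direction quotiented away in passing from $U(n,m)$ to $U(n,m)/U(m)$, so that only the $\mathfrak{u}(n)$-component of the bracket survives as curvature of the reduced bundle, and this component vanishes exactly when $\mathrm{Im}\,\mu = 0$.
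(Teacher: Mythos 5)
Your proof is correct and is essentially the paper's own argument: both come down to equation (\ref{su_M}), which says that $\mathrm{Im}\,\mu=0$ forces $[\hat{X},\hat{Y}]$ to lie in $\mathfrak{u}(m)$, so that nothing survives in the $U(n)$-direction once one passes to the quotient bundle over $U(n,m)/U(m)$. The only difference is packaging: you express flatness as vanishing of the $\mathfrak{u}(n)$-component of the curvature of the pushed-forward canonical connection, while the paper expresses it as integrability of the distribution spanned by $\hat{X},\hat{Y},[\hat{X},\hat{Y}]$ in $U(n,m)$ followed by the quotient by $U(m)$ --- two equivalent formulations of the same fact.
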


\begin{proof}
By a left translation, without loss of generality, assume that $S$ passes through the origin of the affine symmetric space $D_{n,m}.$

Consider the bundle
$U(n)\times U(m) \ra U(n,m) \stackrel{\tilde{\pi}}\lra D_{n,m}.$
Then $S$ induces a bundle
$U(n)\times U(m) \ra \tilde{\pi}^{-1}(S) \ra S$.
Totally geodesic condition says that the distribution induced from
$\text{Span}_{\bbr} \{ \hat{X}, \hat{Y}, [\hat{X},\hat{Y}] \}$ is integrable.
Since $\text{Im}\mu=0$ implies that $[\hat{X},\hat{Y}]$  is contained in the Lie algebra   $ {\mathfrak u}(m) $ of $U(m)$ from the equation (\ref{su_M}) in the proof of Theorem \ref{easy},  the conclusion is obtained.
\end{proof}

\bigskip

\begin{theorem}
\label{easy_generalizion}
\label{geod-cond-sphere}
Given $X \in U_{m,n}(\bbc)$ and the natural fibration
$
 U(n) \times U(m) \ra U(n,m) \stackrel{\tilde{\pi}}\lra D_{n,m},
$
consider the 2-dimensional subspace $\frakm' = \text{Span}_{\bbr} \{ \hat{X}, \widehat{iX}\} $. Then,
\begin{enumerate}
\item
$\frakm'$ gives rise to a complete totally geodesic  surface $S$ in 
$D_{n,m},$
\item
$\frakm'$ induces a $U(1)$-subbundle of a bundle 
$$U(n) \times U(m) \ra \tilde{\pi}^{-1}(S) \ra S,$$
which is an immersion of the bundle
$$
  S\big( U(1) \times U(1) \big)
  \ra
  SU(1,1)
  \ra
  SU(1,1) / S\big(U(1) \times U(1))
$$
into 
$$U(n) \times U(m) \ra U(n,m) \stackrel{\tilde{\pi}}\lra D_{n,m},$$
such that it is isomorphic to
the Hopf bundle $S^1 \ra S^{2,1} \ra \bbc H^1,$
\item
the immersion is conformal. In fact, 
$$ \big| \tilde{f}_{*}v \big| = \sqrt{n} \, |v|$$ 
under the expression $\tilde{f}: SU(1,1) \ra U(n,m)$ for the immersion.
\end{enumerate}
\end{theorem}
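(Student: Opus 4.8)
The plan is to derive all three assertions from a single computation of the Lie subalgebra of $\mathfrak{u}(n,m)$ generated by $\frakm'$, and then to read off the geometry. For assertion (1), taking $Y=iX$ gives $X^*Y=i\lambda I_n$, i.e.\ $\mu=i\lambda$ with $\lambda>0$ by Lemma \ref{lambda}; thus $\text{Im}\,\mu\ne0$ and trivially $iX\in\text{Span}_{\bbr}\{X,iX\}$, which is precisely the converse hypothesis of Theorem \ref{easy}. In fact the proof of that theorem already records
$$
[[\hat X,\widehat{iX}],\hat X]=-4\lambda\,\widehat{iX},\qquad
[[\widehat{iX},\hat X],\widehat{iX}]=-4\lambda\,\hat X,
$$
so $[[\frakm',\frakm'],\frakm']\subseteq\frakm'$ and Proposition \ref{affine} produces the complete totally geodesic surface $S$; hence (1) is immediate.

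For assertion (2) I would first compute the only remaining bracket,
$$
[\hat X,\widehat{iX}]=\begin{pmatrix} 2i\lambda I_n & O \\ O & -2iXX^* \end{pmatrix}\in\frakh,
$$
so that $\mathfrak{g}':=\frakm'+[\frakm',\frakm']=\text{Span}_{\bbr}\{\hat X,\widehat{iX},[\hat X,\widehat{iX}]\}$ is a $3$-dimensional subalgebra. Using the three displayed bracket relations I would check that the map $\phi:\mathfrak{su}(1,1)\to\mathfrak{g}'$ sending
$$
E_1\mapsto\tfrac1{\sqrt\lambda}\hat X,\quad E_2\mapsto\tfrac1{\sqrt\lambda}\widehat{iX},\quad E_3\mapsto\tfrac{-1}{2\lambda}[\hat X,\widehat{iX}]
$$
preserves all brackets, hence is a Lie algebra isomorphism. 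Integrating $\phi$ gives a homomorphism of $\widetilde{SU(1,1)}$ into $U(n,m)$; to see it descends to $SU(1,1)$ I would evaluate it on the generator $t\mapsto\exp(tE_3)$, $0\le t\le2\pi$, of $\pi_1(SU(1,1))$, whose image is $\text{diag}\big(e^{-it}I_n,\exp(\tfrac{it}\lambda XX^*)\big)$; since $\tfrac1\lambda XX^*$ has spectrum $\{0,1\}$ this closes up to $I_{n+m}$ at $t=2\pi$, so the immersion $\tilde f:SU(1,1)\to U(n,m)$ is well defined, and it is an immersion because $\phi$ is injective and the metrics are left-invariant. Finally the fiber circle $S(U(1)\times U(1))=\exp(\bbr E_3)$ is carried into $U(n)\times U(m)$ with $U(n)$-component the central $e^{-it}I_n$, which exhibits $\tilde f(SU(1,1))$ as a principal $U(1)$-subbundle of $U(n)\times U(m)\to\tilde\pi^{-1}(S)\to S$; being the $\tilde f$-image of the bundle $S(U(1)\times U(1))\to SU(1,1)\to\bbc H^1$ of the Preliminaries, it is an immersed copy of the Hopf bundle $S^1\to S^{2,1}\to\bbc H^1$.

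For assertion (3), conformality reduces by left-invariance and the fact $d\tilde f=\phi$ to evaluating $\phi$ on the orthonormal basis $E_1,E_2,E_3$. Using $X^*X=\lambda I_n$ and $(XX^*)^2=\lambda XX^*$ I would compute $\text{Tr}(\hat X^2)=2\lambda n$ and $\text{Tr}(\phi(E_3)^*\phi(E_3))=2n$, giving $|\tfrac1{\sqrt\lambda}\hat X|=|\tfrac1{\sqrt\lambda}\widehat{iX}|=|\phi(E_3)|=\sqrt n$; the off-diagonal versus diagonal block pattern makes these three images pairwise orthogonal. Thus $\phi$ scales every length by $\sqrt n$, and therefore $|\tilde f_*v|=\sqrt n\,|v|$ for all $v$.

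The bracket and trace computations (assertions (1) and (3)) are routine. The main obstacle is the global content of assertion (2): promoting the infinitesimal isomorphism $\phi$ to a genuine homomorphism defined on $SU(1,1)$ rather than on its universal cover, and checking that it carries the fiber circle and the entire principal-bundle structure faithfully onto an immersed Hopf bundle. The decisive step is the spectral computation $\exp(2\pi\phi(E_3))=I_{n+m}$, which shows the compact generator of $\pi_1(SU(1,1))$ dies under $\tilde f$ and thereby turns the local Lie-theoretic statement into a statement about bundles.
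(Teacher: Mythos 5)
Your proposal is correct, and in outline it is the same as the paper's proof: the paper defines
$K=\left[\begin{smallmatrix} -i\lambda I_n & 0\\ 0 & iXX^*\end{smallmatrix}\right]$
(so your $\phi(E_3)=\tfrac{-1}{2\lambda}[\hat X,\widehat{iX}]=\tfrac1\lambda K$ is literally its map $f$), verifies the same three bracket relations to get (1) and the Lie algebra monomorphism, and proves (3) by the same norm computation showing
$\bigl|\tfrac1{\sqrt\lambda}\hat X\bigr|=\bigl|\tfrac1{\sqrt\lambda}\widehat{iX}\bigr|=\bigl|\tfrac1\lambda K\bigr|=\sqrt n$
for the orthogonal image basis. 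The one place where you genuinely diverge is the global step in (2), and there your argument is actually the correct one: the paper obtains $\tilde f$ from $f$ by asserting that ``$SU(1,1)$ is simply connected,'' which is false --- $SU(1,1)$ deformation retracts onto $S(U(1)\times U(1))\cong S^1$, so $\pi_1(SU(1,1))\cong\bbz$. Your route (integrate on the universal cover, then kill the generator of $\pi_1$ by the spectral computation $\exp(2\pi\phi(E_3))=I_{n+m}$, using that $\tfrac1\lambda XX^*$ is a rank-$n$ projection) is exactly what is needed to make the descent to $SU(1,1)$ legitimate; in the paper this periodicity only appears implicitly, after the fact, in Remark \ref{fiber}. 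So your proposal repairs a real gap. Two small points: orthogonality of $\hat X$ and $\widehat{iX}$ does not follow from the block pattern (both are off-diagonal), but from the direct trace computation $\mathrm{Tr}(\hat X\,\widehat{iX})=i\lambda n-i\lambda n=0$; and for $\tilde f$ to be a monomorphism (as the paper claims), note that its kernel is a discrete normal, hence central, subgroup of $SU(1,1)$, and $-I=\exp(\pi E_3)$ maps to $\mathrm{diag}\bigl(-I_n,\,I_m-\tfrac2\lambda XX^*\bigr)\neq I$, so the kernel is trivial.
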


\begin{proof}
From Lemma \ref{lambda}, let $X^* \, X = \lambda I_n$ for some $\lambda>0$.

By a left translation, without loss of generality, assume that $S$ passes through the origin of the affine symmetric space $D_{n,m}.$

Note that,
for
$
  K =
  \left[
          \begin{array}{cc}
            - i \lambda I_n & 0 \\ 0 & iXX^{*}
          \end{array}
  \right]\
  \!\in {\mathfrak {u}}(n) \times {\mathfrak {u}}(m),
$
$$
  [\hat{X},\widehat{iX}]= -2 K,  \quad
  [K,\hat{X}]=2 \lambda \widehat{iX},  \quad
  [K,\widehat{iX}]=-2 \lambda \hat{X},
$$
which implies $[[\frakm',\frakm'],\frakm'] \subset \frakm'$ and the conclusion (1). 

Consider an orthonormal basis of
 ${\mathfrak {su}}(1,1)$:
$$
  E_1 =
  \left[
          \begin{array}{cc}
            0 & 1 \\ 1 & 0
          \end{array}
  \right]\ ,
  \quad
  E_2 =
  \left[
          \begin{array}{cc}
            0 & -i \\ i & 0
          \end{array}
  \right]\ ,
  \quad
  E_3 =
  \left[
          \begin{array}{cc}
            -i & 0 \\ 0 & i
          \end{array}
  \right]\ ,
$$
and a Lie algbra monomorphism
$f : {\mathfrak su}(1,1) \ra {\mathfrak u}(n,m)$, given by
$$
  f(aE_1+bE_2+cE_3)
  = \frac{a}{\sqrt{\lambda}} \hat{X} 
    + \frac{b}{\sqrt{\lambda}}  \widehat{iX} 
    + \frac{c}{\lambda}  K  
$$
\noindent
for $a,b,c \in \bbr,$ from
$$[E_1,E_2]= -2 E_3, \quad [E_3,E_1]=2E_2, \quad [E_3,E_2]=-2E_1.$$
For any $\theta \in \bbr,$ 
$$
  e^{\theta E_3} = 
  \left[
          \begin{array}{cc}
            e^{-i \theta} & 0 \\ 0 & e^{i \theta}
          \end{array}
  \right]\
  \in S \big( U(1) \times U(1) \big).  
$$ 
Thus $f$ will induce a Lie group monomorphism
$\tilde{f} : SU(1,1) \ra U(n,m)$ with
$
  \tilde{f}\Big({S \big( U(1) \times U(1) \big)}\Big)
  \subset
  U(n) \times U(m)
$
since $SU(1,1)$ is simply connected and
$S\big(U(1) \times U(1)\big)$ is connected.
Furthermore, it is the bundle map from
$$
  S\big( U(1) \times U(1) \big)
  \ra
  SU(1,1)
  \ra
  SU(1,1) / S\big(U(1) \times U(1))
$$
to
$$U(n) \times U(m) \ra U(n,m) \stackrel{\tilde{\pi}}\ra D_{n,m},$$
so the connected component of the integral manifold of the distribution induced by 
$\text{Span}_{\bbr}\{K,\hat{X}, \widehat{iX}\},$
which is the image of $\tilde{f},$
shows (2).

Note that 
$
 \{
   \frac{1}{\sqrt{\lambda}} \hat{X} , 
   \frac{1}{\sqrt{\lambda}}  \widehat{iX} ,
   \frac{1}{\lambda}  K  
 \}
$
is an orthogonal basis of the image of $\tilde{f}$ such that
$$
  \sqrt{n} 
  = \Big| \tfrac{1}{\sqrt{\lambda}} \hat{X} \Big| 
  = \Big| \tfrac{1}{\sqrt{\lambda}}  \widehat{iX} \Big|
  = \Big| \tfrac{1}{\lambda}  K  \Big|,
$$
which shows (3).
\end{proof}

\bigskip
\begin{remark} \label{fiber}
 Let $\hat{\theta} = \tfrac{\theta}{\lambda}.$
 Then, for $\Phi = -E_3,$
  $$
   \tilde{f}(e^{\theta \Phi})
   = \tilde{f}(e^{-\theta E_3}) 
   = e^{-\hat{\theta} K}
   =
   \left[
          \begin{array}{cc}
            e^{i \theta} I_n & 0 \\ 
            0 & I_m + \tfrac{e^{-i \theta} -1}{\lambda} XX^{*}
          \end{array}
   \right]\ 
 $$
 from
 $$
    (-i \hat{\theta} XX^{*})^{j}
    = \Big(\tfrac{-i\theta}{\lambda}\Big)^{j} X(X^{*}X)^{j-1}X^{*}
    = \tfrac{(-i \theta)^{j}}{\lambda} XX^{*}
 $$
 for $j=1,2, \cdots .$
 Furthermore,
 \begin{align*}
   &\Big(I_m + \tfrac{e^{-i \theta} -1}{\lambda} XX^{*}\Big)
    \Big(I_m + \tfrac{e^{-i \phi} -1}{\lambda} XX^{*}\Big)  \\
   &=
    I_m + \tfrac{e^{-i \theta} + e^{-i \phi} -2}{\lambda} XX^{*}
    + \tfrac{e^{-i(\theta +\phi)}-e^{-i \theta}-e^{-i \phi} +1}{\lambda ^2}
       X(X^{*}X)X^{*} \\
   &= 
    I_m + \tfrac{e^{-i(\theta +\phi)} -1}{\lambda} XX^{*},    
 \end{align*}
 from which it is also obtained that
 $$
    I_m =
    \Big(I_m + \tfrac{e^{-i \theta} -1}{\lambda} XX^{*}\Big)
    \Big(I_m + \tfrac{e^{-i \theta} -1}{\lambda} XX^{*}\Big)^{*} .
 $$
\end{remark}

\bigskip
We return to the bundle
$U(n) \ra U(n,m)/U(m) \stackrel{\pi}\lra D_{n,m}$.
In fact, Remark \ref{fiber} implies that the immersed $U(1)$-subbundle, which is the image of $\tilde{f},$ gives two $U(1)$-bundles, one of which is an immersed $U(1)$-subbundle in the bundle
$U(n) \ra U(n,m)/U(m) \stackrel{\pi}\lra D_{n,m}$
and the other one is an immersed $U(1)$-subbundle in the bundle
$U(m) \ra U(n,m)/U(n) \stackrel{\hat{\pi}}\lra D_{n,m}.$

\bigskip
\begin{theorem}
\label{thm-sphere}
Assume the same condition for  a complete totally geodesic  surface
$S$  of Theorem \ref{easy}  
and consider the immersed bundle
$U(n) \ra \pi^{-1}(S) \stackrel{\pi}{\rightarrow} S$
in the bundle
$U(n) \ra U(n,m)/U(m) \stackrel{\pi} \ra D_{n,m}.$ 
Let $\gamma$ be a piecewise smooth, simple closed curve on $S$.
Then the holonomy displacement along $\gamma$,
$$\wt\gamma(1)= \wt\gamma(0) \cdot V(\gamma),$$
is given by the right action of
$$
  V(\gamma)=e^{i \theta} I_n \quad \text{or} \quad e^{0i} I_n \in U(n),
$$
depending on whether S is a complex submanifold or not,
where $A(\gamma)$ is the area of the region on the surface $S$ surrounded
by $\gamma$ and $\theta= 2 \cdot \tfrac{1}{n} A(\gamma) .$
Especially, $\theta = 2\cdot A(\gamma)$ in case of $n=1.$
\end{theorem}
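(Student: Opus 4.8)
The plan is to separate the two cases according to whether $S$ is a complex submanifold, which by Corollary~\ref{cor} is governed by $\mu$: the non-complex alternative is exactly $\mathrm{Im}\,\mu=0$ and the complex one is $\mathrm{Im}\,\mu\ne 0$. The non-complex case is immediate. By Theorem~\ref{geod-cond-sphere1}, when $\mathrm{Im}\,\mu=0$ the immersed bundle $U(n)\ra\pi^{-1}(S)\ra S$ is flat; since $\gamma$ bounds a region on $S$ it is null-homotopic, so its horizontal lift closes up and $V(\gamma)=e^{0i}I_n$. This disposes of the second alternative in the statement.

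For the complex case I would first reduce to the model subspace. By the necessity part of Theorem~\ref{easy}, $\mathrm{Im}\,\mu\ne 0$ forces $iX\in\mathrm{Span}_{\bbr}\{X,Y\}$, hence $\frakm'=\mathrm{Span}_{\bbr}\{\hat X,\widehat{iX}\}$, and after a left translation (Proposition~\ref{affine}) I may assume $S$ passes through the origin. Theorem~\ref{easy_generalizion} then supplies a Lie group monomorphism $\tilde f:SU(1,1)\ra U(n,m)$ which is a bundle map from the coset bundle $S(U(1)\times U(1))\ra SU(1,1)\ra SU(1,1)/S(U(1)\times U(1))$ onto the immersed $U(1)$-subbundle inside $U(n)\times U(m)\ra U(n,m)\ra D_{n,m}$, and which is conformal with $|\tilde f_* v|=\sqrt n\,|v|$.

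The core of the argument is to transport holonomy through $\tilde f$. On the Lie-algebra level $f(E_1)=\tfrac{1}{\sqrt\lambda}\hat X$ and $f(E_2)=\tfrac{1}{\sqrt\lambda}\widehat{iX}$ lie in $\frakm$ (horizontal) while $f(E_3)=\tfrac1\lambda K$ lies in $\frakh=\mathfrak u(n)\times\mathfrak u(m)$ (vertical); thus $\tilde f$ carries the horizontal distribution of the coset bundle into $\frakm$, so horizontal lifts map to horizontal lifts. Because the horizontal distribution of the reduced bundle $U(n)\ra U(n,m)/U(m)\ra D_{n,m}$ is again the image of $\frakm$ (cf.\ the discussion following Remark~\ref{fiber}), the horizontal lift of $\gamma$ in this $U(n)$-bundle is the projection of the $\tilde f$-image of the horizontal lift of the corresponding curve $\bar\gamma$ in $SU(1,1)/S(U(1)\times U(1))$. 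Consequently $V(\gamma)$ is the $U(n)$-component of $\tilde f$ applied to the coset-bundle holonomy along $\bar\gamma$.

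It remains to assemble the pieces. Theorem~\ref{simple} gives the coset-bundle holonomy along $\bar\gamma$ as $e^{2A(\bar\gamma)\Phi}$ with $\Phi=-E_3$. Since the induced map on base spaces is conformal with factor $\sqrt n$, areas scale by $n$, so $A(\bar\gamma)=\tfrac1n A(\gamma)$. Finally Remark~\ref{fiber}, with $\theta=2A(\bar\gamma)$, identifies the $U(n)$-component of $\tilde f(e^{\theta\Phi})$ as $e^{i\theta}I_n$, whence
$$
V(\gamma)=e^{\,2iA(\bar\gamma)}I_n=e^{\,2i\frac1n A(\gamma)}I_n=e^{i\theta}I_n,\qquad \theta=2\cdot\tfrac1n A(\gamma),
$$
and $\theta=2A(\gamma)$ when $n=1$. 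I expect the main obstacle to be the bookkeeping in the previous paragraph: verifying that horizontality is preserved under both $\tilde f$ and the reduction from the $U(n)\times U(m)$-bundle to the $U(n)$-bundle over $D_{n,m}$, so that the known $SU(1,1)$-holonomy of Theorem~\ref{simple} genuinely transports to the sought $U(n)$-displacement, together with correctly tracking the conformal area factor $\sqrt n$.
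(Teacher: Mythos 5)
Your proposal is correct and follows essentially the same route as the paper: flatness via Theorem \ref{geod-cond-sphere1} disposes of the non-complex case, and in the complex case the holonomy is transported through the conformal bundle map $\tilde f$ of Theorem \ref{easy_generalizion}, combined with Theorem \ref{simple}, the area scaling $A(\bar\gamma)=\tfrac1n A(\gamma)$ coming from the factor $\sqrt n$, and the explicit block form of $\tilde f(e^{\theta\Phi})$ in Remark \ref{fiber}. Your write-up is in fact slightly more careful than the paper's, since you make explicit both the preservation of horizontality under $\tilde f$ and under the reduction to the $U(n)$-bundle, and the null-homotopy argument in the flat case, which the paper leaves as ``obvious.''
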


\begin{proof}
If $S$ is not a complex manifold, then, from Theorem \ref{geod-cond-sphere1}, the immersed bundle is flat, and so it is obvious that the holonomy displacement is trivial. \\
\indent
If $S$ is a complex manifold, then assume  
the condition of Theorem ~\ref{easy_generalizion} for the immersed $U(1)$-subbundle, which is the image of $\tilde{f}.$
Consider the induced map $\hat{f}: B \ra S \subset D_{n,m}$ between base spaces from the bundle map $\tilde{f}: SU(1,1) \ra \text{Im}(\tilde{f}) \subset U(n,m),$ which is a monomorphism, where $B = SU(1,1)/S(U(1) \times U(1)).$
Let 
$
 \theta 
 = 2\cdot \tfrac{1}{n} A(\gamma). 
$ 
Without loss of generality, assume that the origin of $D_{n,m}$ lies on $S$ and is the initial point of $\gamma.$
The Theorem ~\ref{easy_generalizion}, Theorem \ref{simple} and 
Remark \ref{fiber} say that
the holonomy displacement of $\gamma$ in the bundle 
$U(n) \times U(m) \ra \tilde{\pi}^{-1}(S) \stackrel{\tilde{\pi}}{\rightarrow} S,$
which is immersed in the bundle
$U(n) \times U(m) \ra U(n,m) \stackrel{\tilde{\pi}} \ra D_{n,m},$
is given by the right action of
\begin{align*}
  V(\gamma) 
  &= \tilde{f} \big( V(\hat{f}^{-1} \circ \gamma)  \big) \\
  &= \tilde{f} 
       \big( e^{2 \cdot A(\hat{f}^{-1} \circ \gamma) \Phi} \big)  \\
  &= \tilde{f} \big( e^{\theta \Phi} \big)  \\
  &=    \left[
          \begin{array}{cc}
            e^{i \theta} I_n & 0 \\ 
            0 & I_m + \tfrac{e^{-i \theta} -1}{\lambda} XX^{*}
          \end{array}
        \right]\ .
\end{align*}
Thus in the bundle 
$U(n) \ra \pi^{-1}(S) \stackrel{\pi}{\rightarrow} S,$ which is immersed in the bundle 
$U(n) \ra U(n,m)/U(m) \stackrel{\pi} \ra D_{n,m},$
the holonomy displacement is given by the right action of
$$
  V(\gamma) = e^{i \theta} I_n .
$$
\end{proof}

\bigskip
\begin{remark}
For $n=1$, we have the following Hopf bundle
 $S^1\ra S^{2m,1} \ra \bbc H^m $ under the identification
 $$S^1 = U(1), \quad \bbc H^m = D_{1,m}$$
and 
 \begin{align*}
    S^{2m,1} 
    &= U(1,m)/U(m) \\
    &\cong H^{1,2m} 
    =\{
       (z_0, \cdots , z_m) \in \bbc^{m+1} : 
       -|z_0|^2 + \sum_{k=1}^{m} |z_k|^2 = -1
     \},
 \end{align*}
 where
 $\bbc H^m = U(1,m)/(U(1) \times U(m))$ is given by the quotient metric, so the projection is a Riemannian submersion.
Let $S$ be a complete totally geodesic
surface in $\bbc H^m$ 
and  $\gamma$ be a piecewise smooth, simple closed curve on $S$.
Identify $\bbc ^m \cong M_{m,1}(\bbc).$
If $S$ is induced by $\text{Span} \{ v, w \} \subset \bbc ^m $ with
$\emph{Im}h_{\bbc ^m}(v,w)=0,$ then the holonomy displacement along $\gamma$ is trivial.
See Corollary \ref{geod-cond-cpn} and Theorem \ref{geod-cond-sphere1}.
If $S$ is induced by a two dimensional subspace with complex structure in
$\bbc ^m,$
then the holonomy displacement depends only on the area of the region surrounded by $\gamma.$ In case of $m=1,$  $\bbc H^m$ with the quotient metric is isometric to 
$\bbh^{2} \Big( \tfrac{1}{2} \Big),$ 
where
$$
 \bbh^{m}(r) 
 := \{(x_0,\cdots,x_m) \in \bbr^{m+1} : -x_0^2 + x_1^2 + \cdots + x_m^2 = - r^2\}
$$
for $m \in \bbn$ and for $r>0$.
Refer to the map $h$ defined in Section \ref{pre} and notice that the immersion $\tilde{f}$ of the Theorem \ref{easy_generalizion} is isometric if $n=1.$
\end{remark}

\medskip
\begin{remark}
Let $U(m) \ra U(n,m)/U(n) \stackrel{\hat{\pi}} \ra D_{n,m}$ be the natural
fibration.
Assume the same condition for  a complete totally geodesic  surface
$S$  of  Theorem ~\ref{easy_generalizion},
and consider the bundle
$U(m) \ra \hat{\pi}^{-1}(S) \stackrel{\hat{\pi}}{\rightarrow} S$. Let $\gamma$ be a piecewise smooth, simple closed curve on $S$.
Then the holonomy displacement along $\gamma$ is given by the right action of
$$
V(\gamma)=I_m + \tfrac{e^{-i \theta} -1}{\lambda} XX^{*} \in U(m),
$$
which depends on $X,$ not only on $n,$
where $\theta= 2 \cdot \tfrac{1}{n} A(\gamma).$
\end{remark}

\bibliographystyle{amsplain}

\begin{thebibliography}{20}

\bibitem{BC}
 T. Byun, Y. Choi, The topological aspect of the holonomy displacement on the pricipal $U(n)$ bundles over Grassmannian manifiolds, preprint


\bibitem{CL}
 Y. Choi,  K.B. Lee,  Holonomy displacements  in the  Hopf bundles over  $\bbc H^n$ and the
complex Heisenberg groups, J. Korean Math. Soc. 49, 733--743 (2012)

\bibitem{KN}
 S. Kobayashi and K. Nomizu, Foundations of differential geometry, Vol. II, Reprint of the 1969 original. Wiley Classics Library, A Wiley-Interscience Publication, John Wiley and Sons, Inc., New York, 1996.



\bibitem{Pin}
 U. Pinkall, Hopf tori in $S^3$,
Invent. Math. 81, 379--386 (1985)


\end{thebibliography}

\end{document}